\numberwithin{equation}{section}
\newtheorem{theorem}{Theorem}[section]
\newtheorem{lemma}[theorem]{Lemma}
\newtheorem{corollary}[theorem]{Corollary}
\newtheorem{remark}[theorem]{Remark}
\newcommand*{\nn}{\nonumber}
\newcommand*{\R}{\mathbb{R}}
\newcommand*{\Rn}{\mathbb{R}^n}
\newcommand*{\Hn}{\mathbb{H}^n}
\newcommand*{\divg}{\operatorname{div}}
\newcommand*{\al}{\alpha_\lambda}
\title{A Symmetry problem for some quasi-linear equations in Euclidean space}
\author[R. Dutta]{Ramya Dutta}
\address[Ramya Dutta]{Institut Camille Jordan, Universit\'e Claude Bernard Lyon 1, B\^atiment Braconnier, 21 avenue Claude Bernard, 69622 Villeurbanne Cedex, France}
\email{dutta@math.univ-lyon1.fr}
\author[P.-D. Thizy]{Pierre-Damien Thizy}
\address[Pierre-Damien Thizy]{Institut Camille Jordan, Universit\'e Claude Bernard Lyon 1, B\^atiment Braconnier, 21 avenue Claude Bernard, 69622 Villeurbanne Cedex, France}
\email{pierre-damien.thizy@univ-lyon1.fr}
\begin{document}
	
	\begin{abstract} We prove sharp asymptotic estimates for the gradient of positive solutions to certain nonlinear $p$-Laplace equations in Euclidean space by showing symmetry and uniqueness of positive solutions to associated limiting problems. 
	\end{abstract}
	
	\subjclass[2020]{35B06,35B33,35B44,35B45,35J92}
	\keywords{Quasi-linear Elliptic Equation, Symmetry}
	
	\maketitle
	
	\section{Introduction}
	In this article we are concerned with symmetry and uniqueness results for two quasi-linear elliptic equations in Euclidean space under point-wise bounds for the solutions. The first is the positive entire solutions of the eigenvalue problem for the $p$-Laplace operator in Euclidean space $\Rn$ where $n \ge 2$, $1 < p < n$,
	\begin{align}\label{1eqn1}
		-\Delta_p v = -\lambda v^{p-1} \text{ in } \Rn.
	\end{align}
	By standard elliptic regularity theory \cite{DiBe}, \cite{Tol84} any weak solution $v \in W_{\text{loc}}^{1,p}(\Rn)$ of \eqref{1eqn1} is in fact $C^{1,\alpha}_{\text{loc}}(\Rn)$. We remark that when $\lambda < 0$, by a standard application of Picone's identity (see for example \cite[Theorem-2.1]{AH98}) it follows that there are no positive $W^{1,p}_{\text{loc}}(\Rn)$ solutions to \eqref{1eqn1}. When $\lambda = 0$, the solutions $v$ are called the $p$-harmonic functions and by a well known Liouville theorem the only positive $p$-harmonic functions are constant. We refer to \cite[Corollary-6.11]{HKM} for the Liouville theorem and for more details on $p$-harmonic functions in general. Henceforth we will only consider the case $\lambda > 0$.
	\\\\ The second is a two weight quasi-linear elliptic equation in the punctured space $\Rn \setminus \{0\}$ arising from the Euler-Lagrange equation of the Hardy's inequality in $\Rn$ with two radial weights. Namely we consider the positive weak solutions $u \in W^{1,p}_{\text{loc}}(\Rn \setminus \{0\})$ of the equation 
	\begin{align}\label{1eqnH1}
		-\divg\left(|x|^{-ap}|\nabla u|^{p-2} \nabla u\right) = \frac{\mu}{|x|^{(a+1)p}}u^{p-1} \text{ in } \Rn \setminus \{0\}
	\end{align}
	where, $n \ge 2$, $1 < p < n$, $a \in \R$, $-\infty < \mu \le \overline{\mu}_a$ and $\overline{\mu}_a := \left\vert\frac{n - (a+1)p}{p}\right\vert^p$. The number $\overline{\mu}_a$ is the best constant in Hardy's inequality when $a \neq \frac{n-p}{p}$ (see \cite[Proposition-1.1]{HK12} and also \cite{ACP05}, \cite{CKN}). Again by standard elliptic theory all positive weak solutions $u$ of \eqref{1eqnH1} are in $C^{1,\alpha}_{\text{loc}}(\Rn \setminus \{0\})$.
	\subsection{The eigenfunction equation in $\Rn$} \,
	\\ We are concerned with solutions to the problem \eqref{1eqn1} together with the point-wise bounds
	\begin{align}\label{1eqn2}
		\begin{split}
			& -\Delta_p v = -\lambda v^{p-1} \text{ in } \Rn \\ 
			& C_1 e^{\al \left<x,\xi\right>} \le v(x) \le C_2 e^{\al \left<x,\xi\right>}
		\end{split}
	\end{align}
	where, $C_1, C_2 > 0$ are positive constants, $\xi \in \mathbb{S}^{n-1}$ and $\al := \left(\frac{\lambda}{p-1}\right)^{1/p}$ when $\lambda > 0$. Note that $v(x) = Ce^{\al \left<x,\xi\right>}$ solves the equation \eqref{1eqn2}. The main result here is the $1$-dimensional symmetry and uniqueness (up to multiplication by a positive constant) of solutions to the problem \eqref{1eqn2}.
	\begin{theorem}\label{1thm1} Let, $n \ge 2$, $1 < p < n$ and $\lambda > 0$. Then any weak solution of the problem \eqref{1eqn2} has $1$-dimensional symmetry and is given by $v(x) = Ce^{\al \left<x,\xi\right>}$ for some $C > 0$.
	\end{theorem}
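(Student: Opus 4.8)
The plan is to reduce the problem to a one-dimensional analysis by exploiting the structure of the exponential barrier. Write $v(x) = e^{\al\left<x,\xi\right>}w(x)$; then $w$ satisfies $C_1 \le w \le C_2$ and solves a modified quasi-linear equation in which the zeroth-order term is balanced against the gradient term coming from the exponential factor. After a rotation we may assume $\xi = e_n$, so $v(x) = e^{\al x_n}w(x)$. The first step is to establish that $w$ is in fact bounded together with its gradient: since $v$ is $C^{1,\alpha}_{\mathrm{loc}}$ and trapped between two multiples of $e^{\al x_n}$, elliptic estimates applied on unit balls (after the scaling $v_y(z) := e^{-\al y_n}v(y+z)$, which again solves \eqref{1eqn1} and is uniformly bounded above and below near $z=0$) give a uniform bound $|\nabla v(y)| \le C e^{\al y_n}$, hence $|\nabla w| \le C$ uniformly on $\Rn$.

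The second and central step is a Liouville-type / sliding argument. Consider the ratio or the difference between $v$ and the explicit solution $V(x) = e^{\al x_n}$. The key identity is that both $v$ and $V$ solve the same equation \eqref{1eqn1}, so one can use the strong comparison principle for the $p$-Laplacian together with the linearization. Concretely, I would look at $\sigma_* := \inf\{\sigma > 0 : \sigma V \ge v \text{ in } \Rn\}$, which is finite and positive by the two-sided bound. If $\sigma_* V \not\equiv v$, then $z := \sigma_* V - v \ge 0$ is a nontrivial supersolution of a linear uniformly elliptic equation (the linearization of $-\Delta_p$ around the strictly positive, gradient-nonvanishing function $V$, whose coefficients are locally bounded and elliptic because $\nabla V = \al e^{\al x_n}e_n \neq 0$ everywhere), so by the strong maximum principle $z > 0$ in $\Rn$. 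One then needs to derive a contradiction with the minimality of $\sigma_*$: because $v \le C_2 e^{\al x_n} = (C_2/1)V$ and $v \ge C_1 V$, the quotient $v/V$ is bounded between $C_1$ and $C_2$, and a Harnack-type inequality for $z/V$ (again via the linearized equation) should force $\sigma_* V - v \ge \delta V$ for some $\delta > 0$, contradicting the definition of $\sigma_*$ unless $z \equiv 0$. Hence $v = \sigma_* V$.

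The main obstacle I anticipate is making the linearization argument rigorous despite the degeneracy of the $p$-Laplacian: $-\Delta_p$ is only degenerate elliptic, and the strong comparison principle is delicate when gradients can vanish. Here the saving grace is that the comparison function $V = e^{\al x_n}$ has $\nabla V$ bounded away from $0$ on every compact set, so on the set where $v$ is close to $V$ the operator is genuinely (locally uniformly) elliptic and the linearized operator $L\phi := -\divg\big(|\nabla V|^{p-2}(\nabla\phi + (p-2)\frac{\langle\nabla V,\nabla\phi\rangle}{|\nabla V|^2}\nabla V)\big)$ has continuous, uniformly elliptic coefficients; the strong maximum principle and Harnack then apply. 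One should invoke the strong comparison principle in the form available for $p$-Laplace type operators, e.g. that of Damascelli or Sciunzi, which requires precisely a lower bound on the gradient of one of the two functions. An alternative, if the comparison route proves technical, is a direct energy/Picone argument: test the equation for $v$ with $\varphi = \big(V^p - v^p\big)_+/v^{p-1}$ (and symmetrically), using the generalized Picone inequality to conclude $\nabla(v/V) \equiv 0$; this mirrors the classical proof that positive solutions of linear eigenvalue equations saturating the bound are unique, and only needs the two-sided pointwise bound to guarantee integrability of the test function. Either way, once $v = CV = Ce^{\al\langle x,\xi\rangle}$ is shown, the theorem follows.
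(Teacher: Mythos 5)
Your sliding/comparison strategy is genuinely different from the paper's (which works with directional derivatives of $\log v$ and a translation-compactness argument), but as written it has a real gap precisely at the step you flag as the conclusion. Defining $\sigma_* := \inf\{\sigma>0 : \sigma V \ge v \text{ in } \Rn\}$ and $z := \sigma_* V - v$, the strong maximum principle for the (locally uniformly elliptic) linearization indeed gives $z>0$ unless $z\equiv 0$. But Harnack for the linearized equation is a \emph{local} statement: it controls $\inf_{B_{R/2}} z$ by $\sup_{B_{R/2}} z$ with constants depending on $R$ (and on the ellipticity, which also deteriorates as $x_n \to -\infty$ where $|\nabla V| \to 0$). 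It does not produce a \emph{global} bound $z \ge \delta V$ on all of $\Rn$. In an unbounded domain, $\inf_{\Rn}(z/V)$ may be zero and approached only at infinity, in which case no $\delta>0$ exists and you cannot lower $\sigma_*$ — yet $z$ is strictly positive everywhere. This is exactly the non-compactness obstruction the paper is built to circumvent: in Step I the authors take a minimizing sequence for an extremal quantity, \emph{translate and pass to a limit} so the extremum is attained at an interior point of a limiting solution, apply the linearized strong maximum principle there, and then use the two-sided pointwise bound to identify the extremal value. If you try the same fix in your setup (translate along $x_k$ with $z(x_k)/V(x_k)\to 0$), you find that the blow-up limit $\tilde v_\infty$ satisfies $\tilde v_\infty = \sigma_* V$, which classifies the limit but does not imply $v = \sigma_* V$ for the original solution; the sliding argument does not close.

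Your Picone alternative has a separate gap: the test function $\varphi = (V^p - v^p)_+/v^{p-1}$ is not compactly supported, and both $v$ and $V$ grow like $e^{\al x_n}$; the terms you would need to integrate by parts over $\Rn$ grow like $e^{p\al x_n}$ in the $e_n$ direction, so the integrals diverge. The two-sided pointwise bound by itself does not rescue integrability.

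What the paper gains with its approach: by passing to $w=\log v$ and studying $w_\nu$ for $\nu\in\mathbb{S}^{n-1}_+$, it (a) obtains a uniformly \emph{bounded} quantity from the $C^{1,\alpha}$ estimate, so minimizing sequences translate into a compactness framework; (b) converts the strong maximum principle for the linearized operator into a statement that $w^{(\infty)}_\nu$ is globally \emph{constant} on the limit, whose value is then pinned down to $0$ or $\al\left<\nu,e_n\right>$ by the affine two-sided bound $\al x_n + c_1 \le w \le \al x_n + c_2$; and (c) reduces at the end to a one-dimensional ODE on the original solution. The decisive structural advantage is that the extremal value is identified from the barrier \emph{after} the blow-up, rather than assumed to be attained by (or inferable for) the original solution. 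Your proposal, as it stands, is missing this compactness-and-identification step; without it, neither the sliding nor the Picone route goes through.
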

	\smallskip
	\noindent Problem of $1$-dimensional symmetry and monotonicity of positive solutions has been extensively studied in context of $p$-Laplacian. For symmetry results in half-space $\Rn_+$, we refer to \cite{Dancer13}, \cite{FS2}, \cite{ES20} and for results in this direction for full space $\Rn$, we refer to \cite{FS1}, \cite{FSV3} and references therein.
	\\\\ A similar problem was considered in \cite{RSanH} for the positive eigenfunctions in the real Hyperbolic space $\Hn$ of sectional curvature $-1$, where the moving plane technique was adapted appropriately to prove horospherical symmetry which lead to the uniqueness up to multiplication by a positive constant. A suitable adaptation of moving plane technique does not seem to be immediate for the Euclidean problem \eqref{1eqn2}. Instead we will use a variation of the strong maximum (or minimum) principle to show that the solutions of problem \eqref{1eqn2} have $1$-dimensional symmetry from which the uniqueness result follows. 
	\\\\ Problem \eqref{1eqn2} together with the point-wise bounds appear naturally as a limiting problem in various contexts. As a first application of Theorem-\ref{1thm1} we look at sharp asymptotic gradient estimates at infinity of positive solutions of $p$-Laplace equations with a non-linearity. In \cite{XH1} the authors consider the following problem
	\begin{align}\label{1eqn3}
		-\Delta_p u - \frac{\mu |u|^{p-2}u}{|x|^p} + \lambda |u|^{p-2}u = f(u) \text{ in } \Rn
	\end{align}
	for $u \in W^{1,p}(\Rn)$ where, $n \ge 2$, $1 < p < n$, $0 \le \mu < \overline{\mu} := \left(\frac{n-p}{p}\right)^p$, $\lambda > 0$ and $f \in C(\R)$ is a continuous function such that 
	\begin{align}\label{1eqn3a}
		\limsup_{t \to 0^+} \frac{|f(t)|}{|t|^{q-1}} \le A < +\infty, \quad \limsup_{t \to +\infty} \frac{|f(t)|}{|t|^{p^\ast-1}} \le A < +\infty
	\end{align} 
	with $1 < p < q < p^\ast = \frac{np}{n-p}$.
	\\ The sharp point-wise asymptotic estimates of the solutions and its gradient for the case $\lambda = 0$, $\mu = 0$ and $f(u) = |u|^{p^\ast - 2}u$, which corresponds to a quasi-linear critical Sobolev equation in $\Rn$, was first proved in \cite{Vetois}. In \cite[Theorem-1.3 and Theorem-1.6]{XH1} the following precise point-wise asymptotic estimate of positive solutions of \eqref{1eqn3} has been established using comparison theorems. Let $\gamma_1, \gamma_2$ be the two roots of the equation 
	\begin{align}\label{1eqn4}
		\gamma^{p-1}(n-p - (p-1)\gamma) = \mu
	\end{align} 
	such that $0 \le \gamma_1 < \frac{n-p}{p} < \gamma_2 \le \frac{n-p}{p-1}$ where, $\mu \in \left[0,\overline{\mu}\right)$, $\al := \left(\frac{\lambda}{p-1}\right)^{1/p}$ and $u \in W^{1,p}(\Rn)$ be any positive solution of \eqref{1eqn3} such that $f(u)$ is non-negative in $B_\rho(0)$ and $\Rn \setminus B_R(0)$ for some $\rho, R > 0$, then the following sharp point-wise asymptotic estimates hold
	\begin{align}\label{1eqn5}
		& u(x) \asymp |x|^{-\gamma_1}, \text{ for } x \in B_\rho(0) \setminus \{0\} \\
		\label{1eqn6}
		& u(x) \asymp |x|^{-\frac{n-1}{p(p-1)}}e^{-\al |x|}, \text{ for } x \in \Rn \setminus B_{R}(0)
	\end{align}
	where the constants of asymptotics depend implicitly on the solution $u$ as well as the parameters $n, p, \mu, q, A, f$ of the equation \eqref{1eqn3}. We use the notation $f \asymp g$ for positive functions $f,g$ to mean $C_1 g \le f \le C_2 g$ for some positive constants $C_1 , C_2 > 0$.
	\\\\ Sharp apriori point-wise asymptotic estimates on the general positive entire solutions and its gradient at infinity play a crucial role in problem of classification and proving radial symmetry of solutions to problems involving the $p$-Laplacian. The sharp asymptotic gradient estimate is especially relevant to the proof of radial symmetry of positive entire solutions using the moving plane technique. The upper bound on the gradient estimate is usually easier to prove and follows from the standard $C^{1,\alpha}$-estimates of \cite{DiBe}, \cite{Tol84}. However proving the gradient estimate from  below relies on an additional blow-up analysis and classification of solutions to the associated limiting problem. We refer to \cite{Sciunzi}, \cite{Vetois} for proof of radial symmetry of positive entire solutions in $\mathcal{D}^{1,p}(\Rn)$ to problem \eqref{1eqn3} with critical non-linearity $f(u) = u^{p^\ast - 1}$ when $\lambda = \mu = 0$ and to \cite{OSV} for $\lambda = 0$, $\mu \in (0,\overline{\mu})$. It is worth pointing out that when $\lambda = 0$ and $f(u) = u^{p^\ast - 1}$, the solutions have polynomial decay at infinity. In particular $u(x) \asymp |x|^{-\gamma}$ as $|x| \to \infty$ where $\gamma = \gamma_2$ is the larger root in \eqref{1eqn4}. In this case sharp behavior of the gradient at infinity $|\nabla u(x)| \asymp |x|^{-\gamma  - 1}$ is proved by classifying the blow-up limit of rescaling the solution $u_{R}(x) := R^{\gamma}u(Rx) \to u_\infty(x)$ in $\Rn \setminus \{0\}$ as $R \to \infty$ where the limit function $u_\infty$ develops an isolated singularity at origin. The limiting solution $u_\infty \in C^{1,\alpha}(\Rn \setminus \{0\})$ satisfies the equation 
	\begin{align}\label{1eqn8}
		\begin{cases} -\Delta_p u_\infty - \frac{\mu}{|x|^p}u_\infty^{p-1} = 0 \text{ in } \Rn \setminus \{0\} \\
			u_{\infty} \asymp |x|^{-\gamma}.
		\end{cases}
	\end{align}
	and the solutions are given by $u_\infty(x) = C|x|^{-\gamma}$ where, $\gamma$ is a root of \eqref{1eqn4}. More recently this technique of establishing sharp asymptotic gradient estimate by classification of isolated singularities has been extended to the anisotropic $p$-Laplacian in \cite{EMSV24}, where the classification result has been proved using a refined comparison principle. The classification of isolated singularity of $p$-harmonic functions ($1 < p \le n$ and $\mu = 0$) in punctured space $\Rn \setminus \{0\}$ follows from \cite{kv}. In case $\mu = \overline{\mu} = \left(\frac{n-p}{p}\right)^p$ the classification result was established in \cite{PS05}. We refer to \cite{CC22}, \cite{CFR} where the sharp asymptotic estimate of positive solutions and its gradient at infinity proved in \cite{Vetois2}, \cite{Vetois} was used to classify the solutions. We refer to \cite{PEsposito18}, \cite{PEsposito21} and also \cite{CXL24}, \cite{CXL22} in the anisotropic setting where the classification of isolated singularity of the $n$-Laplacian and respectively the anisotropic $n$-Laplacian in $\Rn \setminus \{0\}$ plays a vital role in the analysis and classification of solutions to the Liouville equation in $\Rn$.
	\\\\ Now coming back to the problem \eqref{1eqn3} when $\lambda > 0$, owing to the exponential decay of the positive solutions of equation at infinity, the nature of blow-up is different and the corresponding limiting problem is \eqref{1eqn2}. In \cite[Theorem-2.1]{XH1} for positive radial solutions $u(r)$ of \eqref{1eqn3} a precise asymptotic expansion of $\left(-\frac{u'(r)}{u(r)}\right)^{p-1}$ as $r \to +\infty$ has been established, the principal term of which is given by
	\begin{align}\label{1eqn7}
		\lim\limits_{r \to +\infty} \left(-\frac{u'(r)}{u(r)}\right)^{p-1} = \al^{p-1} = \frac{\lambda}{p-1}.
	\end{align} Applying Theorem-\ref{1thm1} we retrieve the asymptotic behavior of gradient at infinity as in \eqref{1eqn7} but for general positive solutions $u \in W^{1,p}(\Rn)$ of \eqref{1eqn3} (without the radiality assumption).
	\begin{theorem}\label{1thm2} 
		Let $n \ge 2$, $1 < p < n$, $0 \le \mu < \overline{\mu} := \left(\frac{n-p}{p}\right)^p$, $\lambda > 0$, $\gamma_1$ be the smaller root of \eqref{1eqn4} such that $\gamma_1 \in \left(0,\frac{n-p}{p}\right)$ and $u \in W^{1,p}(\Rn)$ be a positive solution of \eqref{1eqn3} with $f \in C(\R)$ satisfying \eqref{1eqn3a} such that $f(u)$ is non-negative in $B_{\rho_0}(0) \setminus \{0\}$ and $\Rn \setminus B_{R_0}(0)$ for some $R_0 > \rho_0 > 0$. Then 
		\begin{align}\label{1eqn9o}
			& \lim_{|x| \to 0^+} \frac{|x||\nabla u(x)|}{u(x)} = \gamma_1 \\
			\label{1eqn9}
			& \lim_{|x| \to +\infty} \frac{|\nabla u|}{u}(x) = \al
		\end{align}
		and this in particular implies $|\nabla u(x)| \asymp |x|^{-\gamma_1 - 1}$ in  $B_{\rho}(0) \setminus \{0\}$ and $|\nabla u(x)| \asymp |x|^{-\frac{n-1}{p(p-1)}}e^{-\al |x|}$ in $\Rn \setminus B_R(0)$ for some $R > \rho > 0$.	
	\end{theorem}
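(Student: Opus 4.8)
The plan is to establish \eqref{1eqn9o} and \eqref{1eqn9} by a blow-up analysis anchored on the sharp pointwise bounds \eqref{1eqn5}--\eqref{1eqn6} from \cite{XH1}, matching the resulting limit profiles against the classification of the two limiting problems: the isolated-singularity problem \eqref{1eqn8} near the origin, and the exponential eigenfunction problem \eqref{1eqn2} (i.e., Theorem~\ref{1thm1}) at infinity. Once the two ratio limits are known, the pointwise estimates on $|\nabla u|$ follow by writing $|\nabla u(x)| = \frac{u(x)}{|x|}\cdot\frac{|x||\nabla u(x)|}{u(x)}$ near $0$ and $|\nabla u(x)| = u(x)\cdot\frac{|\nabla u(x)|}{u(x)}$ near $\infty$, since $\gamma_1,\al>0$ and $u$ is controlled by \eqref{1eqn5}--\eqref{1eqn6}.

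For the behavior near $0$, I would fix $r>0$ small and set $u_r(y) := r^{\gamma_1}u(ry)$; a direct computation gives
\begin{align*}
-\Delta_p u_r - \frac{\mu}{|y|^p}u_r^{p-1} = -\lambda r^p u_r^{p-1} + r^{\,p-\gamma_1(p^\ast-p)}\, g_r(y),
\end{align*}
with $g_r(y) = r^{\gamma_1(p^\ast-1)}f(u(ry))$. By the growth bound at $+\infty$ in \eqref{1eqn3a} and the blow-up $u(ry)\asymp(r|y|)^{-\gamma_1}$ of \eqref{1eqn5}, $g_r$ is bounded uniformly in $r$ on compact subsets of $\Rn\setminus\{0\}$; since $\gamma_1<\frac{n-p}{p}$ one has $p-\gamma_1(p^\ast-p)>0$, so both inhomogeneous terms vanish locally uniformly as $r\to0$. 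The bound \eqref{1eqn5} also gives $C_1|y|^{-\gamma_1}\le u_r(y)\le C_2|y|^{-\gamma_1}$ on every compact subset of $\Rn\setminus\{0\}$ once $r$ is small, so the $C^{1,\alpha}_{\mathrm{loc}}$ estimates of \cite{DiBe}, \cite{Tol84} make $(u_r)$ relatively compact in $C^1_{\mathrm{loc}}(\Rn\setminus\{0\})$, and passing to the limit in the weak formulation shows that every subsequential limit $u_\infty$ is a positive solution of \eqref{1eqn8} with $u_\infty\asymp|x|^{-\gamma_1}$ on all of $\Rn\setminus\{0\}$. The classification of such isolated singularities forces $u_\infty(x)=C|x|^{-\gamma_1}$, hence $\frac{|y||\nabla u_\infty(y)|}{u_\infty(y)}\equiv\gamma_1$. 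Reading this off on $\{|y|=1\}$ via $C^1$ convergence, together with $\frac{|y||\nabla u_r(y)|}{u_r(y)}=\frac{|ry||\nabla u(ry)|}{u(ry)}$, a routine contradiction argument (were \eqref{1eqn9o} to fail, take $x_k\to0$ with $\frac{|x_k||\nabla u(x_k)|}{u(x_k)}$ bounded away from $\gamma_1$, set $r_k=|x_k|$, extract $x_k/|x_k|\to\theta$) yields \eqref{1eqn9o}.

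For the behavior at infinity, given $x_k\to\infty$ I would extract $\xi_k:=x_k/|x_k|\to\xi\in\mathbb{S}^{n-1}$ and set $v_k(y):=u(x_k+y)/u(x_k)$, so $v_k(0)=1$. From \eqref{1eqn6} and the expansion $|x_k+y|=|x_k|+\langle\xi_k,y\rangle+O(|x_k|^{-1})$, locally uniformly in $y$, one obtains $C_1 e^{-\al\langle y,\xi\rangle}\le v_k(y)\le C_2 e^{-\al\langle y,\xi\rangle}$ on each ball for $k$ large, while
\begin{align*}
-\Delta_p v_k + \lambda v_k^{p-1} = \frac{\mu}{|x_k+y|^p}v_k^{p-1} + u(x_k)^{-(p-1)}f\big(u(x_k)\,v_k(y)\big).
\end{align*}
Since $|x_k+y|\to\infty$ and $u(x_k+y)\to0$ locally uniformly, the growth bound at $0^+$ in \eqref{1eqn3a} together with $q>p$ makes the right-hand side vanish locally uniformly; the $C^{1,\alpha}_{\mathrm{loc}}$ estimates and a diagonal extraction over $B_j(0)$ then give, along a subsequence, $v_k\to v_\infty$ in $C^1_{\mathrm{loc}}(\Rn)$ with $v_\infty>0$ solving $-\Delta_p v_\infty=-\lambda v_\infty^{p-1}$ in $\Rn$ and $C_1 e^{-\al\langle\cdot,\xi\rangle}\le v_\infty\le C_2 e^{-\al\langle\cdot,\xi\rangle}$. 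Thus $v_\infty$ solves problem \eqref{1eqn2} with direction $-\xi$, and Theorem~\ref{1thm1} (applied with $\xi$ replaced by $-\xi$) together with $v_\infty(0)=1$ forces $v_\infty(y)=e^{-\al\langle y,\xi\rangle}$, so $\frac{|\nabla v_\infty(0)|}{v_\infty(0)}=\al$. Since $\nabla v_k(0)=\nabla u(x_k)/u(x_k)$, the $C^1$ convergence gives $\frac{|\nabla u(x_k)|}{u(x_k)}\to\al$, and as $x_k\to\infty$ was arbitrary up to subsequences this proves \eqref{1eqn9}. The hard part will be the blow-up analysis itself: checking that the lower-order and nonlinear terms genuinely disappear in each rescaled limit (which is exactly where $\gamma_1<\frac{n-p}{p}$ and $q>p$ enter), extracting compactness from \eqref{1eqn5}--\eqref{1eqn6}, and---most delicately---pinning the limit profile down uniquely: the global asymptotics $u_\infty\asymp|x|^{-\gamma_1}$ must be used to rule out mixed, Fowler-type solutions of \eqref{1eqn8} so that only $C|x|^{-\gamma_1}$ remains, and the two-sided exponential bound must be used to fix the direction in the application of Theorem~\ref{1thm1}.
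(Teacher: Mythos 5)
Your proposal is correct and follows essentially the same blow-up strategy as the paper's proof: rescale around a sequence of points, use the two-sided bounds \eqref{1eqn5}--\eqref{1eqn6} with the $C^{1,\alpha}$ estimates to extract a $C^1_{\mathrm{loc}}$ limit, identify the limit profile via Theorem~\ref{1thmhardy} (near $0$) and Theorem~\ref{1thm1} (at $\infty$), and conclude. The only cosmetic differences are that the paper normalizes the origin-rescaling by $u(x_k)$ rather than by $r^{\gamma_1}$, and translates by $-x_k$ rather than $+x_k$ at infinity, both of which are immaterial.
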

	\smallskip
	\noindent As a second application we look at a problem which is closely related to the Martin boundary problem coming from potential theory. When $p=2$ this corresponds to the problem of identifying the `ideal-boundary' of the operator $H_\lambda := (-\Delta + \lambda)$ in $\Rn$. We refer to \cite{Pin06} and references therein for a more detailed exposition of the problem from potential theoretic context. It is known that the ideal-boundary of operator $H_\lambda$ in $\Rn$ can be identified with the unit sphere $\mathbb{S}^{n-1}$. Positive solutions $v$ of $H_\lambda v = 0$ in $\Rn$ are given by the representation formula
	\begin{align}\label{1eqnrep}
		v(x) = \int_{\mathbb{S}^{n-1}} e^{\al\left<x,\xi\right>}\,d\mu(\xi)
	\end{align}
	where, $\mu$ is a positive measure on $\mathbb{S}^{n-1}$, in particular $v(x) = e^{\al\left<x,\xi\right>}$ corresponds to the dirac mass $\mu = \delta_{\xi}$ on $\mathbb{S}^{n-1}$. However such representation formulas are not available for $p$-Laplacian when $p \neq 2$. In case of the $p$-Laplacian we are able to get the following analogue of \cite[Example-2.14]{Pin06} as an immediate application of Theorem-\ref{1thm1}. 
	\begin{corollary}\label{corA}
		Let $n \ge 2$, $1 < p < n$, $\lambda > 0$ and $u$ be a positive solution to the problem
		\begin{align}\label{1eqnA}
			-\Delta_p u = -\lambda u^{p-1}
		\end{align}
		in an exterior domain $\Rn \setminus B_{r_0}(0)$ for some $r_0 > 0$ such that $u \in W^{1,p}(\Rn \setminus B_{r_0}(0))$. Then $u$ satisfies the point-wise asymptotic estimate $u(x) \asymp |x|^{-\frac{n-1}{p(p-1)}}e^{-\al |x|}$ in $\Rn \setminus B_{r_0}(0)$ and the limit 
		\begin{align}\label{1eqnB}
			P_\lambda(x,\xi) := \lim_{t \to +\infty} \frac{u(x - t\xi)}{u( - t\xi)}
		\end{align}
		exists and $P_\lambda(x,\xi) = e^{\al\left<x, \xi \right>}$ for $x \in \Rn$ and $\xi \in \mathbb{S}^{n-1}$.
	\end{corollary}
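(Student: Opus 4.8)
The plan is to prove the two assertions separately. First I would establish the sharp pointwise bound $u(x)\asymp|x|^{-\frac{n-1}{p(p-1)}}e^{-\al|x|}$ on $\Rn\setminus B_{r_0}(0)$ by comparison with explicit radial barriers, as in \cite[Theorem-1.3 and Theorem-1.6]{XH1} and \cite{Vetois}; then I would deduce the representation $P_\lambda(x,\xi)=e^{\al\left<x,\xi\right>}$ from a translation (blow-down at infinity) argument that reduces matters to Theorem-\ref{1thm1}.

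For the first step, set $\beta:=\frac{n-1}{p(p-1)}$. Computing the $p$-Laplacian of $w(x)=|x|^{-\beta}e^{-\al|x|}$ in polar coordinates and using $\al^p=\frac{\lambda}{p-1}$, one checks that in $-\Delta_p w+\lambda w^{p-1}$ the leading term cancels identically and the next term, of the form $c\,|x|^{-1}w^{p-1}$, has coefficient proportional to $n-1-p(p-1)\beta$, which vanishes exactly for our choice of $\beta$; thus $w$ solves the equation up to a remainder of relative order $|x|^{-2}$. Adding a suitable lower-order radial correction one obtains, for $|x|$ large, genuine radial super- and subsolutions $w_\pm$ of the equation with $w_\pm(x)\asymp|x|^{-\beta}e^{-\al|x|}$. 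To run the comparison on the unbounded domain I would first note that $u(x)\to0$ as $|x|\to\infty$: as $u\in W^{1,p}(\Rn\setminus B_{r_0}(0))$ with $n>p$, local $L^\infty$ bounds for the $p$-subsolution $u$ of $\Delta_p u=\lambda u^{p-1}\ge0$ give $\sup_{|x|=R}u\to0$. An Agmon-type weighted energy estimate followed by Moser iteration then upgrades this to $u(x)\le C_\sigma e^{-\sigma|x|}$ for every $\sigma<\al$, so that $(u-w_+)^+$ and $(w_--u)^+$ decay rapidly enough to be admissible test functions; the weak comparison principle for the monotone operator $v\mapsto-\Delta_p v+\lambda|v|^{p-2}v$, after fixing the free constants in $w_\pm$ on $\partial B_{r_0}(0)$, then yields $w_-\le u\le w_+$, i.e.\ $u(x)\asymp|x|^{-\beta}e^{-\al|x|}$.

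For the second step, fix $\xi\in\mathbb{S}^{n-1}$ and, for $t$ large, set $u_t(x):=u(x-t\xi)/u(-t\xi)$. Since $|x-t\xi|\ge t-|x|>r_0$ for $x\in B_{t-r_0}(0)$, and since both dilations $v\mapsto cv$ (as \eqref{1eqnA} is $(p-1)$-homogeneous) and translations preserve solutions of \eqref{1eqnA}, the function $u_t$ is a positive weak solution of $-\Delta_p u_t=-\lambda u_t^{p-1}$ on $B_{t-r_0}(0)$ with $u_t(0)=1$, and these balls exhaust $\Rn$. On a fixed ball $B_R(0)$ one has $|x-t\xi|=t-\left<x,\xi\right>+O(1/t)$ uniformly for $|x|\le R$, hence $|x-t\xi|/t\to1$ and $e^{-\al|x-t\xi|}/e^{-\al t}\to e^{\al\left<x,\xi\right>}$; combining this with the first step applied to $u(x-t\xi)$ and to $u(-t\xi)$ produces constants $0<c_1\le c_2$, independent of $R$ and of $t$ large, with $c_1 e^{\al\left<x,\xi\right>}\le u_t(x)\le c_2 e^{\al\left<x,\xi\right>}$ on $B_R(0)$. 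By the $C^{1,\alpha}_{\text{loc}}$ estimates of \cite{DiBe},\cite{Tol84} the family $(u_t)$ is bounded in $C^{1,\alpha}_{\text{loc}}(\Rn)$, so along any sequence $t_k\to\infty$ a subsequence of $(u_{t_k})$ converges in $C^1_{\text{loc}}(\Rn)$ to a positive weak solution $v$ of $-\Delta_p v=-\lambda v^{p-1}$ in $\Rn$ (continuity of $z\mapsto|z|^{p-2}z$ passes the equation to the limit) satisfying $v(0)=1$ and $c_1 e^{\al\left<x,\xi\right>}\le v(x)\le c_2 e^{\al\left<x,\xi\right>}$. Hence $v$ solves problem \eqref{1eqn2}, and Theorem-\ref{1thm1} forces $v(x)=Ce^{\al\left<x,\xi\right>}$ with $C=v(0)=1$. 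Since every subsequential limit equals $e^{\al\left<\cdot,\xi\right>}$, the whole family $u_t$ converges to $e^{\al\left<\cdot,\xi\right>}$ in $C^1_{\text{loc}}(\Rn)$ as $t\to\infty$; in particular the limit in \eqref{1eqnB} exists and $P_\lambda(x,\xi)=e^{\al\left<x,\xi\right>}$.

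The main difficulty is the first step. Extracting the exact exponential rate $\al$ and the exact polynomial prefactor $|x|^{-\frac{n-1}{p(p-1)}}$ requires both the precise barrier construction and a comparison argument on an unbounded domain, where one must first control $u$ at infinity (via the energy/Moser estimates above) before the comparison principle can be applied; this is where I would rely on the techniques of \cite{XH1} and \cite{Vetois}. Once the pointwise bound is established, the translation limit and the appeal to Theorem-\ref{1thm1} are routine, the only mild points being that $C^1_{\text{loc}}$-limits of $p$-Laplace solutions are again weak solutions and that the two-sided exponential bound is preserved under the limit.
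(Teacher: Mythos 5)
Your proposal is correct and matches the paper's argument: the sharp pointwise bound $u(x)\asymp|x|^{-\frac{n-1}{p(p-1)}}e^{-\al|x|}$ is obtained from the explicit radial sub/super-solutions and weak comparison as in \cite[Theorem-1.6]{XH1}, and the representation $P_\lambda(x,\xi)=e^{\al\left<x,\xi\right>}$ then follows from the translated blow-down $v_k=u(\cdot-t_k\xi)/u(-t_k\xi)$, the $C^{1,\alpha}$ compactness of \cite{DiBe,Tol84}, and the classification of Theorem-\ref{1thm1}. The paper simply cites \cite{XH1} for the first step rather than reproducing the barrier construction and Agmon-type decay estimates you sketch, but both the decomposition into these two steps and their execution coincide with the paper's proof.
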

	
	\subsection{The two weight quasi-linear equation with Hardy potential in $\Rn \setminus \{0\}$}\,
	\\ We consider a class of quasi-linear elliptic Euler-Lagrange equations arising from the Hardy's inequality (see \cite[Proposition-1.1]{HK12} and also \cite{ACP05}, \cite{CKN}) with two radial weights. Let $n \ge 2$, $p \in (1,n)$, $a \in \R \setminus \left\{\frac{n-p}{p}\right\}$ and $\overline{\mu}_a := \left\vert\frac{n - (a+1)p}{p}\right\vert^p > 0$ denote the best constant in the Hardy's inequality
	\begin{align}\label{1eqnhardyineq}
		\overline{\mu}_a \int_{\Rn} \frac{|v|^{p}}{|x|^{(a+1)p}}\,dx \le \int_{\Rn} \frac{|\nabla v|^p}{|x|^{ap}}\,dx, \text{ for } v \in C_c^\infty\left(\Rn \setminus \{0\}\right).
	\end{align}
	We are considering the problem
	\begin{align}\label{peqn1}
		\begin{split}
			& -\Delta_{p,a} u - \frac{\mu}{|x|^{(a+1)p}}u^{p-1} = 0 \text{ in } \Rn \setminus \{0\} \\
			& C_1 |x|^{-\gamma} \le u(x) \le C_2|x|^{-\gamma} 
		\end{split}
	\end{align}
	where, $n \ge 2$, $1 < p < n$, $-\Delta_{p,a} u := -\divg\left(|x|^{-ap}|\nabla u|^{p-2}\nabla u \right)$, $a \in \R$, $-\infty < \mu  \le \overline{\mu}_a$, $C_1, C_2 > 0$ and $\gamma$ is a real root of the equation
	\begin{align}\label{peqnroot}
		|\gamma|^{p-2} \gamma (n - (a+1)p - (p-1)\gamma) = \mu.
	\end{align} 
	Note that we are also considering the case $a = \frac{n-p}{p}$. The radial function $u(x) = |x|^{-\gamma}$ solves \eqref{peqn1} if and only if $\gamma$ satisfies \eqref{peqnroot}. Now consider the auxiliary function $$f(\gamma) := |\gamma|^{p-2} \gamma (n - (a+1)p - (p-1)\gamma).$$ It attains its maximum at the unique point $\gamma_a^\ast := \frac{n - (a+1)p}{p}$, with maximum value $f(\gamma_a^\ast) = \left\vert\frac{n - (a+1)p}{p}\right\vert^p = \overline{\mu}_a$. So $f(\gamma) = \overline{\mu}_a$ has the unique real root $\gamma = \gamma_a^\ast$. Otherwise if $\mu < \overline{\mu}_a$, then $f(\gamma) = \mu$ has two distinct real roots $\gamma_{1,a}, \gamma_{2,a}$ such that $\gamma_{1,a} < \gamma_a^\ast < \gamma_{2,a}$. We summarize the placement of the roots $\gamma$ according to $\mu$ and $a$,
	\begin{align}\label{peqngamma}
		& 0 \le \gamma_{1,a} \le \gamma_a^\ast \le \gamma_{2,a} \le \frac{n-(a+1)p}{p-1}, \text{ when } \mu \in [0,\overline{\mu}_a] \text{ and } a < \frac{n-p}{p} \\ 
		& \gamma_{1,a} < 0 < \frac{n-(a+1)p}{p-1} < \gamma_{2,a}, \text{ when } \mu < 0 \text{ and } a < \frac{n-p}{p} \\
		& \gamma_{1,a} \le 0 \le \gamma_{2,a}, \text{ when } \mu \le 0 \text{ and } a = \frac{n-p}{p} \\
		& \frac{n-(a+1)p}{p-1} \le \gamma_{1,a} \le \gamma_a^\ast \le \gamma_{2,a} \le 0, \text{ when } \mu \in [0,\overline{\mu}_a] \text{ and } a > \frac{n-p}{p} \\
		& \gamma_{1,a} < \frac{n-(a+1)p}{p-1} < 0 < \gamma_{2,a}, \text{ when } \mu < 0 \text{ and } a > \frac{n-p}{p}.
	\end{align}
	The case $a = 0$ and $\mu \in [0,\overline{\mu}_a)$ corresponds to \eqref{1eqn8}. A complete classification and asymptotic behavior of the radial solutions to \eqref{peqn1} was given in \cite{IT21}.
	\begin{theorem}\label{1thmhardy} Let $n \ge 2$, $1 < p < n$, $a \in \R$, $-\infty < \mu  \le \overline{\mu}_a$, $\gamma$ is a real root of \eqref{peqnroot} and $u \in C^{1,\alpha}_{\text{loc}}(\Rn \setminus \{0\})$ be a solution to the problem \eqref{peqn1}. Then $u(x) = C|x|^{-\gamma}$ in $\Rn \setminus \{0\}$ for some $C > 0$.
	\end{theorem}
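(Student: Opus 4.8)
The plan is to pass to Emden--Fowler (logarithmic cylinder) coordinates, reduce the problem on $\Rn\setminus\{0\}$ to an autonomous quasi-linear equation on the cylinder $\Sigma:=\R\times\mathbb{S}^{n-1}$, and combine a ``touching'' form of the strong maximum principle with a translation/scaling compactness argument. First I would write $x=e^{t}\theta$ with $t=\ln|x|\in\R$ and $\theta=x/|x|\in\mathbb{S}^{n-1}$, and set $u(x)=|x|^{-\gamma}v(t,\theta)$. A direct computation using that $\gamma$ is a root of \eqref{peqnroot} shows that the weight $|x|^{-ap}$ cancels and that $v$ is a bounded weak solution, $C_1\le v\le C_2$, of an autonomous (translation invariant in $t$) quasi-linear equation $\mathcal L(v)=0$ on $\Sigma$ of $p$-Laplacian type with smooth, bounded coefficients, for which \emph{every constant function is a solution}. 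By the regularity theory of \cite{DiBe}, \cite{Tol84}, $v\in C^{1,\alpha}_{\mathrm{loc}}(\Sigma)$, and since the equation is autonomous and $v$ is trapped between two positive constants these bounds are uniform on $\Sigma$; hence the family of $t$-translates $\{v(\cdot+s,\cdot):s\in\R\}$ is precompact in $C^1_{\mathrm{loc}}(\Sigma)$ and every limit is again a solution of $\mathcal L=0$ with the same two-sided bounds.

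The core is the following touching lemma: \emph{if $\overline M:=\sup_\Sigma v$ is attained at an interior point of $\Sigma$, then $v\equiv\overline M$} (and symmetrically for $\underline m:=\inf_\Sigma v$). To see this, note $h:=\overline M-v\ge 0$; subtracting $\mathcal L(\overline M)=0$ from $\mathcal L(v)=0$ and linearising along the segment joining the two, $h$ is a nonnegative weak solution of a homogeneous quasi-linear equation on $\Sigma$ of $p$-Laplacian (Serrin) type with bounded lower-order coefficients, so the Harnack inequality for nonnegative solutions and the strong maximum principle of Trudinger--Serrin (equivalently V\'azquez/Pucci--Serrin) give $h\equiv 0$ on the connected set $\Sigma$. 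Equivalently, in the original variables $u$ and the extremal radial solution $U:=\overline M|x|^{-\gamma}$ both solve \eqref{peqn1}, $u\le U$, and at an interior contact point $x_0$ the common gradient equals $\nabla U(x_0)=-\overline M\gamma|x_0|^{-\gamma-2}x_0\neq 0$ when $\gamma\neq 0$, so the linearised operator is uniformly elliptic near $x_0$ and the classical strong maximum principle applies; the residual case $\gamma=0$ forces $\mu=0$ by \eqref{peqnroot} and then reduces to the Liouville statement that a bounded $|x|^{-ap}$-weighted $p$-harmonic function on $\Rn\setminus\{0\}$ is constant (cf.\ \cite{HKM}). Consequently $\{v=\overline M\}$ and $\{v=\underline m\}$ are each open and closed in $\Sigma$, hence empty unless $v$ is constant.

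Assume $v$ is non-constant, so both extremal values are approached only as $t\to\pm\infty$. Translating in $t$ along a sequence on which $\overline M$ is approached --- which in the original variables is the rescaling $u(x)\mapsto\lambda^{\gamma}u(\lambda x)$, under which both \eqref{peqn1} and the two-sided bound are invariant --- and passing to the limit by the compactness above produces a bounded solution on $\Sigma$ which \emph{does} attain its supremum at an interior point; by the touching lemma that limit is the constant $\overline M$, and likewise one obtains the constant $\underline m$ as such a limit. One is therefore reduced to excluding a ``heteroclinic'' solution on $\Sigma$ joining the two distinct constant states $\underline m$ and $\overline M$. Using the uniform Harnack inequality for nonnegative solutions (uniform in $t$ by autonomy) to control the oscillation in $\theta$, together with a rotating-sphere argument on $\mathbb{S}^{n-1}$ adapted to the cylinder --- or, as in the proof of Theorem~\ref{1thm1}, a variant of the strong maximum principle --- one forces $v$ to be independent of $\theta$; that is, $u$ is radial. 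At that point the complete classification of the radial solutions of \eqref{peqn1} in \cite{IT21} applies: among radial solutions, the only one trapped between two positive multiples of $|x|^{-\gamma}$ is $C|x|^{-\gamma}$, which completes the proof.

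The main obstacle is precisely this last reduction --- ruling out a non-constant bounded solution whose supremum and infimum are attained only ``at the ends'' $t=\pm\infty$. Purely local tools (interior $C^{1,\alpha}$ estimates and Harnack) cannot by themselves do this, since exponential-type relaxation toward one end is entirely consistent with them; what breaks the tie is the rigidity of the radial problem, where $\mathcal L=0$ becomes a one-dimensional autonomous equation whose only bounded orbits are equilibria, as established in \cite{IT21}. Thus the real work is the reduction to the radial case, for which the strong-maximum-principle ``touching'' mechanism together with the scaling/translation compactness provides, I expect, the right engine; carefully handling the angular variable --- and the possibility that one or both extremal values escape to $t=\pm\infty$ --- is the delicate part.
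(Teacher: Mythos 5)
Your strategy shares the paper's toolkit---scaling/translation compactness, linearization, and the Harnack/strong-maximum-principle mechanism---but it is applied to the wrong quantity, and this leaves exactly the gap you flag as ``delicate'': you cannot, with a touching lemma on $v$ alone, rule out a non-constant heteroclinic on $\Sigma$ connecting $\underline m$ to $\overline M$, and the proposed ``rotating-sphere argument'' to kill the angular dependence is never carried out. The touching lemma only fires at an \emph{interior} extremum; translating to the ends merely reproduces the two constant states as limits, which is perfectly compatible with a genuine heteroclinic, and the uniform Harnack bound controls oscillation in $\theta$ at each fixed $t$ but says nothing about whether $v$ depends on $\theta$ at all. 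So as written the proof does not close.

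The paper circumvents this by never applying the extremal/Harnack argument to $u$ (or $v$) directly. Instead, it linearises along the full family of invariances of \eqref{peqn1}---the radial scaling field $V_0=\langle x,\nabla\cdot\rangle$ and the rotation fields $V_{i_0,j_0}$---and runs your compactness-plus-Harnack argument on the ratios $V(u)/u$ for $V=\kappa_1V_0+\kappa_2V_{i_0,j_0}$ with $\kappa_1>0$. Combined with a line-integral along the integral curves of $V$ (whose modulus grows like $e^{\kappa_1 t}$) and the two-sided bound $u\asymp|x|^{-\gamma}$, this pins $\sup V(u)/u$ and $\inf V(u)/u$ to the two-point set $\{0,-\kappa_1\gamma\}$. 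Sending $\kappa_1\to0^+$ with $\kappa_2=\pm1$ then forces $V_{i_0,j_0}(u)\equiv0$ for every pair $(i_0,j_0)$, i.e.\ $u$ is radial---a one-line deduction that entirely sidesteps the heteroclinic issue, because it operates on the angular derivatives rather than on $u$. After radiality, the paper integrates the ODE directly (using the gradient bound $r|u'|\le\tilde\gamma u$ and the two-sided bound on $u$) to get $ru'/u\equiv-\gamma$, so it also avoids invoking \cite{IT21}. The moral is that your cylinder reformulation is sound and your touching lemma is correct but is the wrong rigidity statement to aim at; the one that actually closes the proof is the analogous rigidity for $V(u)/u$ with the rotation fields included.
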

	\begin{remark}
		When $a \le \frac{n-p}{p}$ and $\gamma = 0$ ($\mu = 0$) then \eqref{peqn1} corresponds to bounded $\mathcal{A}$-harmonic functions $-\divg \mathcal{A}(x,\nabla u) := -\divg \left(|x|^{-ap}|\nabla u|^{p-2}\nabla u\right) = 0$ with a removable singularity at origin. We refer to \cite[Example-2.22]{HKM} the weight $\omega(x) := |x|^{-ap}$ in the stated range of $a \le \frac{n - p}{p}$ corresponds to the class of $p$-admissible weights and the $(p,\omega)$-capacity of the singleton set $\{0\}$ is zero, i.e., $\operatorname{Cap}_{p,\omega}(\{0\}) = 0$. Therefore by \cite[Theorem-7.36]{HKM} the solution $u$ has a removable singularity at origin and the result follows from the Liouville theorem \cite[Theorem-6.10]{HKM}. Theorem-\ref{1thmhardy} extends this result to the full range $a \in \R$. Note that when $a = \frac{n-p}{p}$, the solution $u(x) = -c\log |x|$ is a non-removable singularity of $-\divg\left(|x|^{p-n}|\nabla u|^{p-2}\nabla u\right) = 0$ in $\Rn \setminus \{0\}$, however, Theorem-\ref{1thmhardy} does not capture this.
	\end{remark}
	\smallskip
	\noindent As an immediate application of Theorem-\ref{1thmhardy} we have the sharp gradient estimates of solutions to the following quasi-linear elliptic Hardy-Sobolev-Maz'ya equation with critical exponent considered in \cite{PHT2023},
	\begin{align}\label{peqnHSM}
		\begin{split}
			& -\Delta_{p,a} u - \frac{\mu}{|x|^{(a+1)p}}u^{p-1} = \frac{u^{p_{a,b}^\ast - 1}}{|x|^{bp_{a,b}^\ast}} \text{ in } \Rn \\ 
			& \quad u > 0 \text{ and } u \in \mathcal{D}^{1,p}\left(\Rn, |x|^{-ap}\right)
		\end{split}
	\end{align}
	where, $0 \le a < \frac{n-p}{p}$, $a \le b < a+1$, the critical exponent $p_{a,b}^\ast := \frac{np}{n - (a + 1 - b)p} \in (p,p^\ast]$ and $0 < \mu < \overline{\mu}_a$. The space $\mathcal{D}^{1,p}\left(\Rn, |x|^{-ap}\right)$ denotes the completion of $C_c^1(\Rn)$ with respect to the semi-norm $$\lVert v \rVert_{\mathcal{D}^{1,p}\left(\Rn, |x|^{-ap}\right)} := \left(\int_{\Rn} |\nabla v|^p|x|^{-ap}\,dx\right)^{1/p}.$$
	\\ The sharp point-wise asymptotic estimates of the solutions and its gradient for the case $\mu = 0$, $-\infty < a < \frac{n-p}{p}$, which corresponds to a quasi-linear elliptic Caffarelli-Kohn-Nirenberg equation in $\Rn$, was first proved in \cite{Vetois2}. Let $u$ be a solution of \eqref{peqnHSM} then the following sharp point-wise asymptotic estimate was established in \cite{PHT2023},
	\begin{align}\label{peqndecay}
		u(x) \asymp |x|^{-\gamma_{1,a}} \text{ in } B_\rho(0)\setminus \{0\} \\
		u(x) \asymp |x|^{-\gamma_{2,a}} \text{ in } \Rn \setminus B_R(0)
	\end{align}
	for some $0 < \rho < R$ where, $\gamma_{1,a}, \gamma_{2,a}$ are the two positive real roots of \eqref{peqnroot} such that $0 < \gamma_{1,a} < \frac{n - (a+1)p}{p} < \gamma_{2,a} < \frac{n - (a+1)p}{p-1}$ (note that $0 < \mu < \overline{\mu}_a$ here). The constants of asymptotics depend implicitly on the solution $u$ as well as the parameters of the equation $n, p, \mu, a , b$. 
	\\ Using Theorem-\ref{1thmhardy} one can then proceed exactly as in the proof of \eqref{1eqn9o} in Theorem-\ref{1thm2} (which closely follows the proofs of \cite[Theorem-3.3]{OSV} and \cite[Theorem-1.4]{EMSV24}) to get the following sharp gradient estimates of the solutions to \eqref{peqnHSM}. We omit the proof of this result.
	\begin{theorem}\label{1thmhardygrad}
		Let $n \ge 2$, $1 < p < n$, $0 \le a < \frac{n-p}{p}$, $a \le b < a+1$, the critical exponent $p_{a,b}^\ast := \frac{np}{n - (a + 1 - b)p} \in (p,p^\ast]$ and $0 < \mu < \overline{\mu}_a$. Let $u$ be a solution of \eqref{peqnHSM} then it satisfies 
		\begin{align}\label{peqngraddlimit}
			\lim_{|x| \to 0^+} \frac{|x||\nabla u(x)|}{u(x)} = \gamma_{1,a} \\
			\lim_{|x| \to +\infty} \frac{|x||\nabla u(x)|}{u(x)} = \gamma_{2,a}
		\end{align}
		and hence the sharp point-wise asymptotic gradient estimates
		\begin{align}\label{peqngraddecay}
			|\nabla u(x)| \asymp |x|^{-\gamma_{1,a} - 1} \text{ in } B_\rho(0)\setminus \{0\} \\
			|\nabla u(x)| \asymp |x|^{-\gamma_{2,a} - 1} \text{ in } \Rn \setminus B_R(0)
		\end{align}
		for some $0 < \rho < R$ and the constants of asymptotics depend on the parameters $n, p, \mu, a , b$ and the solution $u$, where $\gamma_{1,a}, \gamma_{2,a}$ are the two positive real roots of \eqref{peqnroot} such that $0 < \gamma_{1,a} < \frac{n - (a+1)p}{p} < \gamma_{2,a} < \frac{n - (a+1)p}{p-1}$.
	\end{theorem}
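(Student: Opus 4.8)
The plan is to adapt to both ends $\{0\}$ and $\{+\infty\}$ the blow-up argument used for \eqref{1eqn9o} in Theorem~\ref{1thm2} (which follows \cite[Theorem-3.3]{OSV} and \cite[Theorem-1.4]{EMSV24}), feeding the limiting profiles into the classification Theorem~\ref{1thmhardy}. The only genuinely new point, compared with Theorem~\ref{1thm2}, is that here \emph{both} ends are algebraic, and one must check that the critical Hardy--Sobolev--Maz'ya nonlinearity $|x|^{-bp_{a,b}^\ast}u^{p_{a,b}^\ast-1}$ drops out of the limiting equation; this is a sign computation for an associated scaling exponent.

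Fix a positive solution $u$ of \eqref{peqnHSM} and recall the sharp a priori decay \eqref{peqndecay}. Introduce the rescalings $u_\lambda(x):=\lambda^{\gamma_{1,a}}u(\lambda x)$ and $v_\lambda(x):=\lambda^{\gamma_{2,a}}u(\lambda x)$ on $\Rn\setminus\{0\}$, to be used as $\lambda\to 0^+$ and as $\lambda\to+\infty$. Since $-\Delta_{p,a}$ is homogeneous of degree $p-1$ and invariant under $x\mapsto\lambda x$, a direct computation gives
$$-\Delta_{p,a}u_\lambda-\frac{\mu}{|x|^{(a+1)p}}u_\lambda^{p-1}=\lambda^{\sigma(\gamma_{1,a})}\,\frac{u_\lambda^{p_{a,b}^\ast-1}}{|x|^{bp_{a,b}^\ast}}\quad\text{in }\Rn\setminus\{0\},\qquad \sigma(\gamma):=\frac{p(a+1-b)\big(n-(a+1)p-p\gamma\big)}{n-(a+1-b)p},$$
and likewise for $v_\lambda$ with $\gamma_{2,a}$ in place of $\gamma_{1,a}$. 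Because $a\le b<a+1$ and $p_{a,b}^\ast>0$ give $a+1-b>0$ and $n-(a+1-b)p>0$, while $\gamma_{1,a}<\frac{n-(a+1)p}{p}<\gamma_{2,a}$, we get $\sigma(\gamma_{1,a})>0$ and $\sigma(\gamma_{2,a})<0$, so $\lambda^{\sigma(\gamma_{1,a})}\to 0$ as $\lambda\to 0^+$ and $\lambda^{\sigma(\gamma_{2,a})}\to 0$ as $\lambda\to+\infty$.

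By \eqref{peqndecay}, the families $\{u_\lambda\}_{0<\lambda\le1}$ and $\{v_\lambda\}_{\lambda\ge1}$ are bounded above and below by fixed positive multiples of $|x|^{-\gamma_{1,a}}$, respectively $|x|^{-\gamma_{2,a}}$, uniformly on compact subsets of $\Rn\setminus\{0\}$; on such sets $|x|^{-ap}$ is smooth and bounded between positive constants and the right-hand sides above are uniformly bounded, so the $C^{1,\alpha}_{\text{loc}}$ theory of \cite{DiBe}, \cite{Tol84} gives uniform local $C^{1,\alpha}$ bounds. Passing to a subsequence, $u_\lambda\to u_\infty$ and $v_\lambda\to v_\infty$ in $C^1_{\text{loc}}(\Rn\setminus\{0\})$; since the coefficient of the nonlinearity tends to $0$ and the nonlinearity is locally bounded, letting $\lambda$ go to its limit in the weak formulations and in the two-sided bounds shows that $u_\infty$ and $v_\infty$ are positive $C^{1,\alpha}_{\text{loc}}(\Rn\setminus\{0\})$ solutions of \eqref{peqn1} with $\gamma=\gamma_{1,a}$, respectively $\gamma=\gamma_{2,a}$, which are exactly the real roots of \eqref{peqnroot} (and $\mu\in(0,\overline{\mu}_a)$ is admissible in Theorem~\ref{1thmhardy}). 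Hence $u_\infty(x)=C_\ast|x|^{-\gamma_{1,a}}$ and $v_\infty(x)=C_{\ast\ast}|x|^{-\gamma_{2,a}}$ for positive constants, so $\frac{|x||\nabla u_\infty(x)|}{u_\infty(x)}\equiv\gamma_{1,a}$ and $\frac{|x||\nabla v_\infty(x)|}{v_\infty(x)}\equiv\gamma_{2,a}$, independently of the extracted subsequence.

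Finally, exploit the scale invariance of $R(y):=\frac{|y||\nabla u(y)|}{u(y)}$, namely $R(\lambda x)=\frac{|x||\nabla u_\lambda(x)|}{u_\lambda(x)}$ for all $\lambda>0$. For a sequence $y_k\to 0$, write $y_k=\lambda_k\xi_k$ with $\lambda_k=|y_k|$, $\xi_k\in\mathbb{S}^{n-1}$; extracting so that $\xi_k\to\xi_0$ and $u_{\lambda_k}\to C_\ast|x|^{-\gamma_{1,a}}$ in $C^1$ near $\mathbb{S}^{n-1}$ gives $R(y_k)=\frac{|\nabla u_{\lambda_k}(\xi_k)|}{u_{\lambda_k}(\xi_k)}\to\gamma_{1,a}$; subsequence-independence then yields $R(y)\to\gamma_{1,a}$ as $|y|\to 0^+$, and the same argument with $v_\lambda$ and $\lambda_k\to+\infty$ gives $R(y)\to\gamma_{2,a}$ as $|y|\to+\infty$, which is \eqref{peqngraddlimit}. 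Since $\gamma_{1,a},\gamma_{2,a}>0$, $R$ is bounded away from $0$ and $\infty$ near the origin and near infinity, so $|\nabla u(x)|=R(x)|x|^{-1}u(x)$ combined with \eqref{peqndecay} gives \eqref{peqngraddecay}. The main technical obstacle is the uniform local $C^{1,\alpha}$ estimate for the rescaled families and the passage to the limit in the quasi-linear operator, but this is standard once the sign of $\sigma(\gamma_{1,a})$ and $\sigma(\gamma_{2,a})$ is in hand.
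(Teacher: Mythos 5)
Your proof is correct and follows exactly the route the paper sketches (and then omits): rescale at both ends, use the a priori decay \eqref{peqndecay} for local $C^{1,\alpha}$ compactness, check that the critical Hardy--Sobolev--Maz'ya nonlinearity scales out, pass to the limit problem \eqref{peqn1}, invoke Theorem~\ref{1thmhardy}, and use scale invariance of $|x||\nabla u|/u$ together with subsequence independence to conclude. The sign computation for $\sigma(\gamma)$, which is the only non-routine step, is done correctly: since $a+1-b>0$, $n-(a+1-b)p>0$ and $\gamma_{1,a}<\frac{n-(a+1)p}{p}<\gamma_{2,a}$, one indeed has $\sigma(\gamma_{1,a})>0$ and $\sigma(\gamma_{2,a})<0$, so the critical term vanishes in both blow-up limits.
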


	\section{Main ideas and motivation for the proof of Theorem-\ref{1thm1}}
	Before going to the proof of Theorem-\ref{1thm1} we briefly motivate the strategy of proof. We remark that a similar problem of classification of eigenfunctions was also considered in context of the real Hyperbolic space of constant sectional curvature $-1$ in \cite[Theorem-3.8]{RSanH} where the result was proved using a suitable modification of the moving plane method. A similar adaptation of moving plane technique does not seem to be immediate for the Euclidean problem \eqref{1eqn2}. Instead we will use a variation of the strong maximum (or minimum) principle to show that the solutions of problem \eqref{1eqn2} have $1$-dimensional symmetry which in turn implies the uniqueness result of Theorem-\ref{1thm1}.
	\\\\ It is interesting to note that the solutions of problem \eqref{1eqn2} correspond to the equality case in a gradient estimate with the best constant for positive eigenfunctions in $\Rn$. A gradient estimate with the best constant for positive eigenfunctions of $p$-Laplacian on complete non-compact manifolds $(M,g)$ with negative lower bound on Ricci curvature $\operatorname{Ric} \ge - (n-1)g$ was proved in \cite{SW14}. A straightforward adaptation of \cite[Theorem-1.1]{SW14} gives the best constant for the gradient estimate of positive eigenfunctions in $\Rn$. We add a detailed proof of the lemma in Appendix for the sake of completeness.
	\begin{lemma}\label{lemmagrad}
		Let $n \ge 2$, $1 < p < n$ and $\lambda > 0$. Suppose $v \in C^{1,\alpha}_{\text{loc}}(\Rn)$ to be a positive eigenfunction of the $p$-Laplacian corresponding to the eigenvalue $-\lambda$ i.e., 
		\begin{align}\label{E1}
			-\Delta_p v = -\lambda v^{p-1} \text{ in } \Rn.
		\end{align}
		Then $v$ satisfies the sharp gradient estimate
		\begin{align}\label{E2}
			|\nabla \log v| \le \al
		\end{align}
		where, $\al = \left(\frac{\lambda}{p-1}\right)^{1/p}$.
	\end{lemma}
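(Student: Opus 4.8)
The plan is to adapt the Bernstein-type $P$-function argument used by Sung–Wang to the flat case $M = \Rn$. First I would set $v = e^{w}$, so that $w = \log v$ and the equation $-\Delta_p v = -\lambda v^{p-1}$ becomes, after expanding $\Delta_p(e^w) = e^{(p-1)w}\bigl(|\nabla w|^{p-2}(\Delta w + (p-1)|\nabla w|^2) + \text{(second-order terms)}\bigr)$, a quasi-linear equation for $w$ of the form $\divg\bigl(|\nabla w|^{p-2}\nabla w\bigr) + (p-1)|\nabla w|^p = \al^p(p-1) \cdot \text{(const)}$ — more precisely the equation $|\nabla w|^{p-2}\bigl(\Delta w + (p-1)|\nabla w|^2\bigr) + (\text{the anisotropic Hessian term}) = \lambda$. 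The point is that the target estimate $|\nabla w| \le \al$ is exactly the statement that the function $\phi := |\nabla w|^2$ is bounded by $\al^2$ everywhere.

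The core step is a differential inequality for $\phi$ (or for a suitable power of it) that lets a maximum-principle / local-gradient-estimate argument run. Following the linearization of the $p$-Laplacian along $w$, I would introduce the linearized operator $\mathcal{L}\psi := \divg\bigl(|\nabla w|^{p-2}(\nabla\psi + (p-2)\langle\nabla w, \nabla\psi\rangle |\nabla w|^{-2}\nabla w)\bigr)$, which is (degenerate) elliptic on $\{\nabla w \neq 0\}$, and compute $\mathcal{L}\phi$ using the Bochner formula on $\Rn$ (here Ricci $\equiv 0$, which is what replaces the $\operatorname{Ric} \ge -(n-1)g$ hypothesis of \cite{SW14} — the flat case is the boundary case, which is why the constant $\al$ is sharp). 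Bochner gives $\tfrac12\Delta|\nabla w|^2 = |\nabla^2 w|^2 + \langle\nabla w, \nabla\Delta w\rangle$; combined with the equation for $w$ and the Cauchy–Schwarz bound $|\nabla^2 w|^2 \ge (\Delta w)^2/n$ (or the refined version isolating the radial Hessian direction), this produces an inequality of the shape $\mathcal{L}\phi \ge (\text{nonneg.}) - C\langle\nabla w,\nabla\phi\rangle - (\text{linear in }\phi)$ that forces $\phi \le \al^2$ at any interior maximum. Since $\Rn$ is non-compact, $\phi$ need not attain a max, so I would instead localize: fix $R > 0$, work on $B_{2R}(x_0)$, multiply by a cutoff $\eta \in C_c^\infty(B_{2R})$ with $|\nabla\eta| \le C/R$, derive a bound on $\sup_{B_R}\phi$ with an $O(1/R)$-type error, and let $R \to \infty$. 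Because the equation is autonomous and the constant $\al^p(p-1) = \lambda$ carries no $R$-dependence, the error term vanishes and one is left with $\phi \le \al^2$ on all of $\Rn$.

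Two technical points need care. The degeneracy of $\Delta_p$ on the set $Z := \{\nabla w = 0\} = \{\nabla v = 0\}$ means $\mathcal{L}$ is not uniformly elliptic and $\phi$ is only $C^{1,\alpha}$, not smooth, so the Bochner computation is justified only on $\Rn \setminus Z$; I would handle this in the standard way for $p$-Laplacian gradient estimates — either by a regularization $|\nabla w|^2 \mapsto |\nabla w|^2 + \ve$ (replacing $\Delta_p$ by the nondegenerate operator with density $(|\nabla w|^2+\ve)^{(p-2)/2}$), running the argument, and sending $\ve \to 0$; or by observing that on $Z$ the desired inequality $|\nabla w| = 0 \le \al$ holds trivially and that $Z$ cannot support an interior maximum of $\phi$ unless $\phi \equiv 0$. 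The second point is the sign bookkeeping in the Hessian term for $p \neq 2$: when $1 < p < 2$ versus $p > 2$ the term $(p-2)\langle\nabla w,\nabla\phi\rangle|\nabla w|^{-2}\nabla w$ changes sign, but in both regimes it is a first-order term that gets absorbed into the drift, so the conclusion is unaffected — I would simply carry the factor $(p-2)$ symbolically throughout.

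The main obstacle I anticipate is getting the Bochner-type inequality into a clean enough form that the localized maximum principle closes with the \emph{sharp} constant $\al = (\lambda/(p-1))^{1/p}$ rather than a constant times it. This is where the refined Cauchy–Schwarz (splitting $\nabla^2 w$ into its component along $\nabla w$ and the rest, using the improved bound $|\nabla^2 w|^2 \ge \tfrac{1}{n-1}\bigl(\Delta w - w_{\nu\nu}\bigr)^2 + w_{\nu\nu}^2$ with $\nu = \nabla w/|\nabla w|$) matters, exactly as in \cite{SW14}; I would follow their algebraic optimization, specializing their curvature parameter to $0$, to see that the coefficient in front of $\phi$ in the resulting inequality is precisely $p(p-1)\al^{p-2}$ times $(\phi - \al^2)$ up to a positive factor, which gives the sharp bound.
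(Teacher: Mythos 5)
Your proposal uses the same essential ingredients as the paper's proof: the substitution $v = e^w$ (the paper uses the equivalent normalization $w = -(p-1)\log v$), the linearized operator $L_w$, the Bochner-type identity specialized to $\operatorname{Ric}\equiv 0$, and the refined Cauchy--Schwarz decomposition isolating the Hessian component along $\nu = \nabla w/|\nabla w|$ exactly as in \cite{SW14}. Where you depart genuinely from the paper is in the final step, and the difference is worth recording. You propose a Cheng--Yau-style pointwise argument: cut off on $B_{2R}$, look at an interior maximum of $\eta^2(\phi - \kappa)_+$, extract an $O(1/R)$ bound, and let $R\to\infty$. This does close, and your observation that any such maximum must lie where $\phi>\kappa>0$, hence where $\nabla w\neq 0$ and $\phi$ is smooth, is the correct way to sidestep the degeneracy (so the regularization $|\nabla w|^2\mapsto |\nabla w|^2+\ve$ you mention as an alternative is actually superfluous for this argument). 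The paper instead works entirely in the weak formulation: having shown $L_w(\omega)\ge a\omega - b|\nabla\omega|$ on $\{f>\kappa+\delta\}$ with $\omega=(f-\kappa-\delta)_+$, it tests against $\varphi^2\omega^q$ for a large fixed power $q$, absorbs the first-order and cross terms by Cauchy--Schwarz, and arrives at a Caccioppoli-type inequality $a\int\varphi^2\omega^{q+1}\le 2\epsilon\int|\nabla\varphi|^2\omega^{q+1}$; iterating over nested balls $B_k\subset B_{k+1}$ forces $\int_{B_R}\omega^{q+1}$ to grow exponentially in $R$, contradicting the boundedness of $\omega$ and the polynomial volume growth $|B_R|\sim R^n$ unless $\omega\equiv 0$. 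The paper's route never invokes pointwise $C^2$ regularity or a second-derivative test and is thus more robust to the degeneracy, at the cost of the extra iteration machinery; your route is shorter in concept but leans on the interior-max observation. One small point of bookkeeping: your description of the zeroth-order term as ``$p(p-1)\al^{p-2}$ times $(\phi-\al^2)$'' glosses over the fact that the term is quadratic in the deviation, $\sim (f^{p/2}-\kappa^{p/2})^2$; the linear form $a\omega$ used in the localization is obtained only after restricting to the super-level set $\{f>\kappa+\delta\}$ (the paper's mean value theorem step \eqref{N2eqn16}), and you should make that explicit before running the maximum principle.
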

	\noindent We start by considering the simple situation where equality is attained in the gradient estimate \eqref{E2} by a positive solution of \eqref{E1} at an interior point $x_0 \in \Rn$. 
	\smallskip
	\\ We use the notation $\mathbf{u} \otimes \mathbf{v} := \mathbf{u} \mathbf{v}^T$ that is $(\mathbf{u} \otimes \mathbf{v})_{ij} = (\mathbf{u}_i\mathbf{v}_j)$ for $\mathbf{u}, \mathbf{v} \in \Rn$ and $$\left<(\mathbf{u} \otimes \mathbf{v}) \mathbf{x}, \mathbf{y} \right> = \left<\mathbf{v},\mathbf{x}\right>\left<\mathbf{u},\mathbf{y}\right>$$
	for $\mathbf{x}, \mathbf{y} \in \Rn$. We denote the formally linearized $p$-Laplace operator by
	\begin{align}\label{2eqn1}
		L_v(f) := \divg \left(|\nabla v|^{p-2}A(\nabla f)\right)
	\end{align}
	where, $$A(\nabla f) := \left[\text{Id} + (p-2)|\nabla v|^{-2} (\nabla v \otimes \nabla v)\right](\nabla f) = \nabla f + (p-2)|\nabla v|^{-2} \left<\nabla v, \nabla f\right> \nabla v.$$ Note that when $|\nabla v| > 0$ we have
	\begin{align}\label{2eqn1a}
		\left<A(\nabla f), \nabla f\right> = |\nabla f|^2 + (p-2)|\nabla v|^{-2} \left<\nabla v, \nabla f \right>^2 \ge \min\{1,p-1\} |\nabla f|^2
	\end{align}
	so that $L_v$ is a strictly positive definite elliptic operator whenever $|\nabla v| > 0$. Suppose $v$ to be a $C^{1,\alpha}$ positive solution of \eqref{E1}, by standard elliptic regularity theory $v$ is smooth away from the critical point set of $v$ i.e., $v \in C^\infty(\Rn \setminus \mathcal{C}_v)$ where the set $\mathcal{C}_v := \{\nabla v = 0\}$ of critical points of $v$ is closed. Therefore, $v_{x_j} = \frac{\partial v}{\partial x_j}$ (for $j = 1, \cdots, n$) satisfies the linearized equation corresponding to \eqref{E1} weakly,
	\begin{align}\label{2eqn2}
		-L_v(v_{x_j}) = -(p-1)\lambda v^{p-2} v_{x_j}
	\end{align}
	for each $j = 1, \cdots, n$ in $\Rn \setminus \mathcal{C}_v$. That is 
	\begin{align}\label{2eqn2a}
		\int_{\Rn} |\nabla v|^{p-2} \left<\nabla v_{x_j}, \nabla \varphi \right> + (p-2)|\nabla v|^{p-4} \left<\nabla v, \nabla v_{x_j}\right> \left<\nabla v, \nabla \varphi \right> \,dx = -(p-1)\lambda \int_{\Rn} v^{p-2} v_{x_j} \varphi \,dx
	\end{align}
	for all $\varphi \in W^{1,2}(\Rn \setminus \mathcal{C}_v)$ with compact support. We also make a note of the simple fact that 
	\begin{align}\label{2eqn3}
		-L_v(v) = -(p-1)\lambda v^{p-2} v.
	\end{align}
	Therefore taking linear combination of \eqref{2eqn2} and \eqref{2eqn3} we get
	\begin{align}\label{2eqn4}
		-L_v(\left<\nabla v, \xi\right> + cv) = -(p-1)\lambda v^{p-2} (\left<\nabla v, \xi\right> + cv)
	\end{align}
	is satisfied weakly in $\Rn \setminus \mathcal{C}_v$ for any $\xi \in \Rn$ and $c \in \R$ .
	\begin{lemma}\label{lemmaeq}
		Let $v$ be a positive $C^{1,\alpha}$ solution of \eqref{E1} in $\Rn$ such that $|\nabla \log v|(x_0) = \al$ for some $x_0 \in \Rn$. Then $v(x) = Ce^{\al \left<x,\xi\right>}$ for some $C > 0$ and $\xi \in \mathbb{S}^{n-1}$.
	\end{lemma}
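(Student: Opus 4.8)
The plan is to exploit the fact that the function $w := \langle \nabla v, \xi\rangle + cv$ satisfies the linearized equation \eqref{2eqn4} in the open set $\Rn\setminus\mathcal C_v$, and to use the sharp gradient bound \eqref{E2} from Lemma \ref{lemmagrad} to produce a function of this type that attains an interior extremum, so that the strong maximum principle forces it to vanish identically. Concretely: writing $h := \log v$, the estimate \eqref{E2} says $|\nabla h|\le\al$ everywhere, with equality at $x_0$; set $\xi := -\nabla h(x_0)/|\nabla h(x_0)| = -\nabla v(x_0)/|\nabla v(x_0)|\in\mathbb S^{n-1}$ (note $x_0\notin\mathcal C_v$ since $|\nabla h(x_0)|=\al>0$) and consider
\begin{align*}
w(x) := \langle\nabla v(x),\xi\rangle + \al v(x) = v(x)\bigl(\langle\nabla h(x),\xi\rangle + \al\bigr).
\end{align*}
Since $v>0$ and $\langle\nabla h,\xi\rangle\ge -|\nabla h|\ge-\al$, we have $w\ge0$ on all of $\Rn$, and $w(x_0) = v(x_0)\bigl(-|\nabla h(x_0)|+\al\bigr) = 0$. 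Thus $w$ attains its minimum value $0$ at the interior point $x_0$.

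The next step is to run the strong minimum principle for the linearized operator. On the open neighbourhood $U$ of $x_0$ where $\nabla v\ne0$, the operator $L_v$ is uniformly elliptic by \eqref{2eqn1a} (with smooth coefficients, since $v\in C^\infty(\Rn\setminus\mathcal C_v)$), and $w$ is a nonnegative weak solution of $-L_v(w) = -(p-1)\lambda v^{p-2}w$ on $U$ by \eqref{2eqn4}, i.e. $L_v(w) - (p-1)\lambda v^{p-2}w = 0$ with $(p-1)\lambda v^{p-2}>0$ bounded on compact subsets of $U$. The classical strong maximum principle (in the form applicable to $Lw - cw = 0$ with $c\ge0$, e.g. Gilbarg–Trudinger) then gives $w\equiv0$ on the connected component of $U$ containing $x_0$. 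This means $\langle\nabla h,\xi\rangle = -\al$ on that component, which combined with $|\nabla h|\le\al$ forces $\nabla h = -\al\xi$ there; hence $|\nabla v|=\al v>0$ throughout, so in fact the component is both open and closed, giving $\nabla h \equiv -\al\xi$ on all of $\Rn$ (here I use that $\Rn\setminus\mathcal C_v$ is nonempty, open, and that the set $\{|\nabla h|=\al,\ \nabla h=-\al\xi\}$ is relatively closed in $\Rn$ while its interior is nonempty and relatively open — a standard continuation argument). Integrating $\nabla\log v = -\al\xi$ yields $v(x) = Ce^{-\al\langle x,\xi\rangle}$ for some $C>0$; replacing $\xi$ by $-\xi$ we get the stated form $v(x)=Ce^{\al\langle x,\xi\rangle}$.

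The main obstacle I anticipate is the passage from "$w\equiv0$ near $x_0$" to "$\nabla\log v$ is globally constant": one must check that the critical set $\mathcal C_v$ cannot obstruct the propagation. The cleanest route is a connectedness argument on the level of $h=\log v$: once $|\nabla h|\equiv\al$ on some open set, $v$ never vanishes its gradient there, so that open set is contained in $\Rn\setminus\mathcal C_v$; the set where additionally $\nabla h=-\al\xi$ is relatively open (by the strong maximum principle applied at each of its points, exactly as at $x_0$) and relatively closed in $\Rn$ (by continuity of $\nabla h$), hence equals $\Rn$ since it is nonempty and $\Rn$ is connected. A secondary technical point is justifying that \eqref{2eqn4} holds weakly in the required Sobolev class near $x_0$ and that the scalar test-function formulation in \eqref{2eqn2a} legitimately yields a classical strong maximum principle — this is routine given the smoothness of $v$ away from $\mathcal C_v$ and the uniform ellipticity \eqref{2eqn1a}, but should be stated carefully. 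One should also record that the hypothesis $|\nabla\log v|(x_0)=\al$ already guarantees $x_0\notin\mathcal C_v$, so the argument genuinely starts in the good region.
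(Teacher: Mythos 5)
Your proposal is correct and follows essentially the same route as the paper: exploit the nonnegativity of $\al v \pm \langle\nabla v, \xi\rangle$ (via Lemma~\ref{lemmagrad}) together with the fact that it weakly solves the linearized equation \eqref{2eqn4}, apply the strong minimum/Harnack principle at $x_0$ to force it to vanish locally, then propagate globally by a continuation argument and conclude via Cauchy--Schwarz that $\nabla\log v$ is the constant vector $\pm\al\xi$. The only differences from the paper are cosmetic: you take the opposite sign convention for $\xi$, you run the continuation step as an open-closed argument on $\{\nabla\log v=-\al\xi\}$ rather than the paper's $\partial\Omega=\emptyset$ contradiction on the connected component of $\Rn\setminus\mathcal{C}_v$, and you cite the Gilbarg--Trudinger strong maximum principle where the paper invokes the Serrin/Trudinger Harnack inequality (both are applicable here since $v$ is smooth on the noncritical set).
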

	\begin{proof}
		Let us denote $\frac{\nabla \log v(x_0)}{|\nabla \log v(x_0)|} = \xi \in \mathbb{S}^{n-1}$. Then from the gradient estimate of Lemma-\ref{lemmagrad} and \eqref{2eqn4}, we know $(\al v - \left<\nabla v, \xi\right>)$ is a non-negative weak solution of the linearized equation
		\begin{align}\label{2eqn5}
			-L_v(\al v - \left<\nabla v, \xi\right>) = -(p-1)\lambda v^{p-2} (\al v - \left<\nabla v, \xi\right>)
		\end{align}
		in $\Rn \setminus \mathcal{C}_v$ where, $\mathcal{C}_v = \{\nabla v = 0\}$. Furthermore, $(\al v - \left<\nabla v, \xi\right>)(x_0) = 0$ implies $|\nabla v(x_0)| > 0$ i.e., $L_v$ is strictly positive definite and elliptic in a neighborhood $B_{\delta}(x_0)$ for some $\delta > 0$. Applying Harnack inequality (see \cite{Serrin70}, \cite{T}) for \eqref{2eqn5} in $B_\delta(x_0)$ we get $(\al v - \left<\nabla v, \xi\right>) \equiv 0$ in $B_\delta(x_0)$. This in particular also implies $|\nabla v| > 0$ in $\overline{B_{\delta}(x_0)}$. Let $\Omega$ denote the connected component of $\Rn \setminus \mathcal{C}_v$ such that $x_0 \in \Omega$. Again applying Harnack inequality we have
		\begin{align}\label{2eqn6}
			\al v - \left<\nabla v, \xi\right> \equiv 0 \text{ in } \Omega.
		\end{align} 
		Then $|\nabla v| > 0$ on $\partial \Omega$ which leads to a contradiction unless $\partial \Omega = \emptyset$ i.e., $\Omega = \Rn$. But from the gradient estimate $|\nabla v| \le \al v$ we get $\left<\nabla v, \xi'\right> \equiv 0$ for any $\xi' \in \xi^{\perp}$ i.e., $\left<\xi',\xi\right> = 0$. This implies $v$ is only a function of $\left<x,\xi\right>$. This combined with $\al v - \left<\nabla v, \xi\right> \equiv 0 \text{ in } \Rn$ from \eqref{2eqn6} gives the desired conclusion $v(x) = Ce^{\al \left<x,\xi\right>}$.
	\end{proof}
	\smallskip
	\noindent In light of the inequality \eqref{E2} and Lemma-\ref{lemmaeq} it is natural to expect rigidity for positive solutions $v$ of \eqref{E1} among the class of functions 
	\begin{align}\label{E3}
		\log v = \al E + b
	\end{align} where, $E$ satisfies $|\nabla E| \equiv 1$ in $\Rn$ and $b$ is a bounded function in $\Rn$. The only entire solutions of $|\nabla E| \equiv 1$ in $\Rn$ are given up to additive constants by the affine functions $E(x) = E(x,\xi) + C$ where,
	\begin{align}\label{E4}
		E(x,\xi) := \left<x,\xi\right>
	\end{align}
	(see for instance \cite[Lemma-7.1]{Crandall2008} and \cite{CC10}). Note that is the Busemann function in $\Rn$ corresponding to $\xi \in \mathbb{S}^{n-1}$ is precisely the affine function 
	\begin{align}\label{E4a}
		\lim_{t \to +\infty} (t - |x - t\xi|) = \left<x,\xi\right>.
	\end{align} 
	Positive solutions $v$ of \eqref{E1} in class \eqref{E3} are necessarily extremals of the inequality \eqref{E2}, i.e., $\sup_{\Rn} |\nabla \log v| = \al$ but the boundedness assumption on $b$ or equivalently the point-wise bounds on $v$ assumed in \eqref{1eqn2} does not immediately imply that equality in \eqref{E2} happens at an interior point $x_0 \in \Rn$. We show that the boundedness assumption on $b$ is sufficient to establish $1$-dimensional symmetry of the solutions which in turn implies that equality must hold in \eqref{E2} at in interior point $x_0 \in \Rn$ i.e., we must have $|\nabla \log v| \equiv \al$ in $\Rn$ and $b$ must be a constant function. Although the best constant in the inequality \eqref{E2} of Lemma-\ref{lemmagrad} is not directly used in the proof of Theorem-\ref{1thm1}, it is implicitly being used through the point-wise bounds assumed in \eqref{1eqn2}. We will also see this in proof of Theorem-\ref{1thmhardy}, where the best constant of the relevant gradient estimate \eqref{peqng} is not known a priori, but the point-wise bounds on the solutions are sufficient for the purpose.
	\section{Proof of the main results}
	\noindent We now proceed with the proof of Theorem-\ref{1thm1}.
	\begin{proof}[Proof of Theorem-\ref{1thm1}]
		
		Note that by making an orthogonal change of coordinates, without loss of generality we may assume $\xi = e_n \in \mathbb{S}^{n-1}$. Therefore we have 
		\begin{align}\label{2eqn7}
			-\Delta_p v &= -\lambda v^{p-1} \text{ in } \Rn \\ 
			\label{2eqn7a}
			C_1 e^{\al x_n} &\le v(x) \le C_2 e^{\al x_n}.
		\end{align}
		We will use the point-wise bounds \eqref{2eqn7a} to show that $v$ has $1$-dimensional symmetry i.e., it is a function of the $x_n$ variable. The uniqueness (up to multiplication be a positive constant) will follow from this. By translation invariance of \eqref{2eqn7}, $C^{1,\alpha}$-estimate (\cite[Theorem-1]{Tol84}, \cite{DiBe}) and Harnack inequality \cite{T} we have
		$$|\nabla v|(x_0) \le \sup_{B_{1/2}(x_0)} |\nabla v| \le C\sup_{B_{1}(x_0)} v \le c(n,p,\lambda) v(x_0)$$ for all $x_0 \in \mathbb{R}^n$ for some universal constant $c(n,p,\lambda) > 0$ i.e., $|\nabla \log v| \le c(n,p,\lambda)$ in $\Rn$.
		\\ Let us denote $w := \log v$, then from \eqref{2eqn7} and \eqref{2eqn7a} we get $w$ satisfies
		\begin{align}\label{2eqn9}
			& -\Delta_p w = -\lambda + (p-1)|\nabla w|^p \text{ in } \Rn \\ 
			\label{2eqn9a}
			& \al x_n + c_1 \le w(x) \le \al x_n + c_2
		\end{align}
		for $c_1, c_2 \in \R$. For $\nu \in \mathbb{S}^{n-1}_+ := \mathbb{S}^{n-1} \cap \{x _n > 0\}$, we denote $w_\nu := \left<\nabla w, \nu\right>$ and since $|\nabla w| = |\nabla \log v| \le c(n,p,\lambda)$ we have
		\begin{align}\label{2eqns1}
			-\infty < \beta(\nu) := \inf_{\Rn} w_\nu \le \sup_{\Rn} w_\nu := \alpha(\nu) < +\infty.
		\end{align}
		We prove the result in two key steps.
		\\ \textbf{Step-I:} We claim that in \eqref{2eqns1} we have
		\begin{align}\label{2eqns2}
			\alpha(\nu), \beta(\nu) \in \{0,\alpha_\lambda\left<\nu,e_n\right>\}
		\end{align}
		for all $\nu\in \mathbb{S}^{n-1}_+$. We prove the claim for $\beta(\nu)$, the proof for $\alpha(\nu)$ follows similarly. To see this we fix $\nu$. If $\beta(\nu) = 0$ then we are through. Suppose now that $\beta(\nu) \neq 0$, then we show that $\beta(\nu) = \alpha_\lambda\left<\nu,e_n\right>$.
		\\ Let $\left\{y^{(k)}\right\}_{k \in \mathbb{N}}$ be a sequence of points in $\Rn$ such that $w_\nu(y^{(k)}) \to \beta(\nu)$ as $k \to \infty$. Define the function $w^{(k)}(x) := w(x + y^{(k)}) - \al y^{(k)}_n$. As before we note that $w^{(k)}$ satisfies the equation 
		\begin{align}\label{2eqn13}
			\begin{cases} 
				& -\Delta_p w^{(k)} = -\lambda + (p-1)|\nabla w^{(k)}|^p \\ 
				& \al x_n + c_1 \le w^{(k)}(x) \le \al x_n + c_2 \\
				& w^{(k)}_{\nu}(0) = w_{\nu}(y^{(k)}) \to \beta(\nu). 
			\end{cases}
		\end{align}
		Furthermore, from definition we have $\inf_{\Rn} w^{(k)}_{\nu} = \inf_{\Rn} w_\nu = \beta(\nu)$ for all $k \in \mathbb{N}$. Applying $C^{1,\alpha}$-estimate (\cite[Theorem-1]{Tol84}, \cite{DiBe}) we have $[w^{(k)}]_{C^{1,\alpha}(B_R)} \le C_R$ for some constant $C_R > 0$. Letting $R \to \infty$ and passing through a subsequence (which we continue to index with $k$) we have $w^{(k)} \to w^{(\infty)}$ in $C^1_{\text{loc}}(\Rn)$ such that $w^{(\infty)}$ also satisfies \eqref{2eqn9} and \eqref{2eqn9a}. Owing to $C^1_{\text{loc}}(\Rn)$ convergence we have $\inf_{\Rn} w^{(\infty)}_{\nu} \ge \beta(\nu)$ and $$\inf_{\Rn} w^{(\infty)}_{\nu} = w^{(\infty)}_{\nu}(0) = \beta(\nu)$$ i.e., the infimum is attained for $w^{(\infty)}_{\nu}$ in the interior of $\Rn$. 
		\\ Let us denote by $v^{(\infty)} = e^{w^{(\infty)}}$ which also satisfies \eqref{2eqn7} and \eqref{2eqn7a}. Denoting $v^{(\infty)}_{\nu} := \left<\nabla v^{(\infty)} , \nu\right>$ and using \eqref{2eqn4} we have $(v_\nu^{(\infty)} - \beta(\nu) v^{(\infty)})$ is a non-negative weak solution of the linearized equation 
		\begin{align}\label{2eqn14}
			-L_{v^{(\infty)}}(v^{(\infty)}_\nu - \beta(\nu) v^{(\infty)}) = -(p-1)\lambda \left(v^{(\infty)}\right)^{p-2}(v^{(\infty)}_\nu - \beta(\nu) v^{(\infty)})
		\end{align}
		in $\Rn \setminus \mathcal{C}_{v^{(\infty)}}$ where, $\mathcal{C}_{v^{(\infty)}} = \{\nabla v^{(\infty)} = 0\}$. Since, $\beta(\nu) \neq 0$ and $(v^{(\infty)}_\nu - \beta(\nu) v^{(\infty)})(0) = 0$, this implies there are no critical points of $v^{(\infty)}$ in an open neighborhood of the origin. Arguing similarly to the proof of Lemma-\ref{lemmaeq}, applying Harnack inequality (\cite{Serrin70}, \cite{T}) we get $(v^{(\infty)}_\nu - \beta(\nu) v^{(\infty)}) \equiv 0$ in this neighborhood and we get from a connectedness argument
		\begin{align}\label{2eqn14a}
			(v^{(\infty)}_\nu - \beta(\nu) v^{(\infty)}) \equiv 0 \text{ in } \Rn
		\end{align} 
		i.e., $w_\nu^{(\infty)} \equiv \beta(\nu)$ in $\Rn$. Since $\frac{d}{dt} w^{(\infty)}(\nu t) = w^{(\infty)}_\nu(\nu t) \equiv \beta(\nu)$, we have \begin{align}\label{2eqn14b}
			w^{(\infty)}(\nu t) = w^{(\infty)}(0) + \beta(\nu) t, \text{ for } t \in \R.
		\end{align}
		However, from the point-wise bounds \eqref{2eqn9a} of $w^{(\infty)}$ we also have 
		\begin{align}\label{2eqn14c}
			\al \left<\nu , e_n\right> t + c_1 \le w^{(\infty)}(\nu t) \le \al \left<\nu , e_n\right> t + c_2, \text{ for } t \in \R.
		\end{align}
		Comparing \eqref{2eqn14b} and \eqref{2eqn14c} we get $\beta(\nu) = \al \left<\nu , e_n\right>$, completing the proof of the claim.
		\\\\ \textbf{Step-II:} Finally with the previous claim we note that $w_\nu \ge 0$ for all $\nu \in \mathbb{S}^{n-1}_+$ as $\alpha_\lambda\left<\nu,e_n\right> > 0$. By continuity we must also have $w_\nu \ge 0$ for all $\nu \in \partial \mathbb{S}^{n-1}_+ = \mathbb{S}^{n-1} \cap \{\left<\nu, e_n\right> = 0 \}$ as well. But this implies $w_{x_j} \equiv 0$ for all $j = 1, \cdots, n-1$ and $w_{x_n} \ge 0$ i.e., $w$ is a non-decreasing function of the single variable $x_n$ and consequently so is $v$. By abuse of notation, let us denote $v(x) = v(x',t) = v(t)$ where $x = (x',t) \in \R^{n-1} \times \R$. 
		
		\smallskip
		
		\noindent We claim that $v(t) = Ce^{\al t}$ for some $C > 0$. Note that $v$ satisfies the ordinary differential equation
		\begin{align}\label{2eqn15} 
			-(|v'|^{p-2}v')' = -\lambda v^{p-1}.
		\end{align}
		Since, $v' \ge 0$ we can write \eqref{2eqn15} as 
		\begin{align}\label{2eqn16}
			((v')^{p-1})' = \lambda v^{p-1}
		\end{align}
		and using the asymptotic behavior $C_1 e^{\al t} \le v(t) \le C_2 e^{\al t}$ and its derivative $0 \le v'(t) \le Ce^{\al t}$, we integrate \eqref{2eqn16} from $-\infty$ to $t$ to get
		\begin{align}\label{2eqn17}
			(v'(t))^{p-1} = \lambda \int_{-\infty}^{t} v^{p-1}(s)\,ds \ge C_1^{p-1}\lambda\int_{-\infty}^{t} e^{(p-1)\al s}\,ds = \frac{C_1^{p-1}\lambda}{(p-1)\al} e^{(p-1)\al t} \ge C' v(t)^{p-1}
		\end{align} for some $C' > 0$. Thus in particular with $\nu = e_n$ in \eqref{2eqns1} we have $$0 < \beta(e_n) = \inf_{t \in \R} \frac{v'}{v} \le \sup_{t \in \R} \frac{v'}{v} = \alpha(e_n)$$
		i.e., $\alpha(e_n)$ and $\beta(e_n)$ are always non-zero. Therefore from Step-I, \eqref{2eqns2} we get $\beta(e_n) = \alpha(e_n) = \al$. Consequently we have $\frac{v'(t)}{v(t)} \equiv \al$, i.e., $v(t) = Ce^{\al t}$ for some $C > 0$, completing proof of the theorem.
	\end{proof}
	\smallskip
	\noindent We now present the proofs of Theorem-\ref{1thm2} and Corollary-\ref{corA} as applications of Theorem-\ref{1thm1} and Theorem-\ref{1thmhardy}.
	
	\begin{proof}[Proof of Theorem-\ref{1thm2}] We start by proving \eqref{1eqn9}. Using the point-wise decay estimate \eqref{1eqn6} of $u$ and the fact that $\frac{|f(u)|}{u^{p-1}} \le C(u^{q-p} + u^{p^\ast - p})$, we may rewrite equation \eqref{1eqn3} as
		\begin{align}\label{deqn1}
			-\Delta_p u = \left(- \lambda + \frac{\mu}{|x|^p} + \frac{f(u)}{u^{p-1}}\right) u^{p-1} = \left(- \lambda + \frac{\mu}{|x|^p} + O\left(|x|^{-\frac{n-1}{p(p-1)}}e^{-\al|x|}\right)^{q-p}\right) u^{p-1}
		\end{align}
		for $x \in \Rn \setminus B_{R_0}$ for some $R_0 > 0$ large.
		Let $\{x_k\}_{k \in \mathbb{N}}$ be a sequence of points in $\Rn \setminus B_{R_0}$ such that $|x_k| \to \infty$ as $k \to \infty$ and let us denote $\xi_k := \frac{x_k}{|x_k|} \in \mathbb{S}^{n-1}$. Define the function $u_k(x) := \frac{u(x - x_k)}{u(-x_k)}$ which satisfies
		\begin{align}\label{deqn2} 
			-\Delta_p u_k = \left(- \lambda + \frac{\mu}{|x - x_k|^p} + O\left(|x - x_k|^{-\frac{n-1}{p(p-1)}}e^{-\al|x - x_k|}\right)^{q-p}\right) u_k^{p-1} = (-\lambda + o(1))u_k^{p-1}
		\end{align}
		in $B_R$ as $k \to \infty$ for any $R > 0$ such that $B_R(x_k) \subset \Rn \setminus B_{R_0}$. 
		\\ Also by the point-wise decay estimate \eqref{1eqn6} of $u$ we have the bounds
		\begin{align}\label{deqn3}
			u_k(x) \asymp \left(\frac{|x - x_k|}{|x_k|}\right)^{-\frac{n-1}{p(p-1)}}e^{-\al (|x - x_k| - |x_k|)} \asymp e^{\al \left<x,\xi_k\right>}
		\end{align}
		in $B_R$ with uniform constants of asymptotics for all $k \ge K_R$ for some $K_R$ chosen large depending on $R$. Here we used the fact that $$|x_k| - |x - x_k| = \frac{|x_k|^2 - |x - x_k|^2}{|x_k| + |x - x_k|} = \frac{2}{1 + \frac{|x - x_k|}{|x_k|}}\left(\left<x,\xi_k\right> - \frac{|x|^2}{2|x_k|}\right) = \left<x,\xi_k\right> + o(1)$$ as $k \to \infty$.
		\\ Using $C^{1,\alpha}$-estimates \cite{DiBe}, \cite{Tol84} to \eqref{deqn2} and noting the locally uniform bounds in \eqref{deqn3} we have $[u_k]_{C^{1,\alpha}(B_R)} \le C_R$ for some $C_R > 0$ depending only on $R > 0$. Then letting $R \to \infty$ and passing through a subsequence (which we continue to index with $k$) we get $\xi_k = \frac{x_k}{|x_k|} \to \xi \in \mathbb{S}^{n-1}$ and $u_k \to u_\infty$ in $C^1_{\text{loc}}(\Rn)$ as $k \to \infty$ such that $u_\infty$ satisfies 
		\begin{align}\label{deqn4}
			-\Delta_p u_\infty = -\lambda u_\infty^{p-1} \text{ in } \Rn.
		\end{align} 
		Furthermore from \eqref{deqn3} we have the point-wise bounds 
		\begin{align}\label{deqn5}
			c_1 e^{\al \left<x,\xi\right>} \le u_{\infty}(x) \le c_2 e^{\al \left<x,\xi\right>}.
		\end{align}
		Then by Theorem-\ref{1thm1} we have $u_\infty(x) = e^{\al \left<x,\xi\right>}$ as $u_\infty(0) = u_k(0) = 1$ for all $k \in \mathbb{N}$. By $C^1_{\text{loc}}(\Rn)$ convergence of $u_k \to u_\infty$ we get $|\nabla u_k|(0) \to |\nabla u_\infty|(0) = \al$. Therefore for every sequence $x_k \to \infty$ we have a subsequence such that $\frac{|\nabla u(-x_k)|}{u(-x_k)} \to \al$ as $k \to \infty$. Hence the limit \eqref{1eqn9} exists and equals $\al$. The sharp asymptotic gradient estimate of $u$ at infinity now follows from this limit and the point-wise bounds \eqref{1eqn6}. 
		
		\bigskip
		
		\noindent Now to prove \eqref{1eqn9o} one can proceed exactly as in the proofs of \cite[Theorem-3.3]{OSV} and \cite[Theorem-1.4]{EMSV24}. Let $\{x_k\}_{k \in \mathbb{N}}$ be a sequence of points in $\Rn \setminus \{0\}$ such that $R_k := |x_k| \to 0^+$. We consider the rescaling of the solution $u_k(x) := \frac{u(R_k x)}{u(x_k)}$ for $k \ge 1$. 
		\\ Using the sharp point-wise asymptotic estimates \eqref{1eqn5}, for any $R > 1$ we have 
		\begin{align}\label{reqn1}
			C_1|x|^{-\gamma_1} \le u_k(x) \le C_2|x|^{-\gamma_1} \text{ in } A(R) := B_{R}(0) \setminus B_{1/R}(0) 
		\end{align} 
		uniformly for all $k \ge K$ for some $K \in \mathbb{N}$ chosen large enough depending on $R$. 
		\\ Also from \eqref{1eqn3a} we have $\frac{|f(u)|}{u^{p-1}} \le C(u^{q-p} + u^{p^\ast - p})$. Since, $0 < \gamma_1 < \frac{n-p}{p}$ we have $p - (q - p)\gamma_1 \ge p - (p^\ast - p)\gamma_1 > 0$. From the point-wise behavior of $u$, we get $u_k$ satisfies the equation
		\begin{align}\label{reqn2}
			-\Delta_p u_k - \frac{\mu}{|x|^p}u_k^{p-1} &= -\lambda R_k^p u_k^{p-1} + R_k^pO\left(u^{q-p}(R_kx) + u^{p^\ast - p}(R_kx)\right)u_k^{p-1} \nn \\ 
			&= -\lambda R_k^p u_k^{p-1} + R_k^{p - (p^\ast-p)\gamma_1}O\left(|x|^{-(q-p)\gamma_1} + |x|^{-(p^\ast-p)\gamma_1}\right)u_k^{p-1} = o(1)
		\end{align}
		in $A(R) = B_{R}(0) \setminus B_{1/R}(0) $ as $k \to +\infty$.
		\\ Therefore using the $C^{1,\alpha}$-estimates \cite{DiBe}, \cite{Tol84} to \eqref{reqn2} together with the point-wise bounds \eqref{reqn1} we have $[u_k]_{C^{1,\alpha}(A(R))} \le C_R$ for some $C_R > 0$. Letting $R \to \infty$ we may extract a subsequence (which we continue to index with $k$) such that $\xi_k := \frac{x_k}{R_k} \to \xi \in \mathbb{S}^{n-1}$ and $u_{k} \to u_\infty$ in $C^1_{\text{loc}}(\Rn \setminus \{0\})$, where $u_{\infty}$ satisfies the limiting problem
		\begin{align}\label{reqn3}
			\begin{split}
				& -\Delta_p u_{\infty} - \frac{\mu u_{\infty}^{p-1}}{|x|^p} = 0 \text{ in } \Rn \setminus \{0\} \\
				& u_\infty(x) \asymp |x|^{-\gamma_1}.
			\end{split}
		\end{align} Using Theorem-\ref{1thmhardy} we have $u_\infty(x) = |x|^{-\gamma_1}$ as $u_\infty(\xi) = \lim_{k \to \infty} u_k(\xi_k) = 1$. In particular for every sequence $x_k \to 0$ we may extract a subsequence such that
		\begin{align}
			\frac{R_k|\nabla u(x_k)|}{u(x_k)} = |\nabla u_k(\xi_k)| \to |\nabla u_\infty(\xi)| = \gamma_1.
		\end{align} Hence the limit \eqref{1eqn9o} exists and equals $\gamma_1$. The sharp asymptotic gradient estimate of $u$ near origin now follows from the point-wise bounds \eqref{1eqn5}. 
	\end{proof}
	\bigskip
	\begin{proof}[Proof of Corollary-\ref{corA}]
		Since, $u \in W^{1,p}(\Rn \setminus B_{r_0})$, we note that from the explicit sub and super-solution to the problem \eqref{1eqnA} constructed in proof of \cite[Theorem-1.6]{XH1} and the weak comparison theorems we have $$u(x) \asymp |x|^{-\frac{n-1}{p(p-1)}}e^{-\al |x|}$$ in $\{|x| > r_0\}$. Define the function $v_k(x) := \frac{u(x - t_k\xi)}{u(- t_k\xi)}$, where $t_k \to \infty$ as $k \to \infty$. It satisfies the equation 
		\begin{align}\label{ceqn1}
			-\Delta_p v_k = -\lambda v_k^{p-1} \text{ in } \Rn \setminus B_{r_0}(t_k\xi).
		\end{align}
		Fix $R > 0$, from the explicit point-wise bounds on $u$ we have 
		\begin{align}\label{ceqn2}
			C_1\left(\frac{|x - t_k\xi|}{|t_k\xi|}\right)^{-\frac{n-1}{p(p-1)}}e^{-\al (|x - t_k\xi| - |t_k\xi|)} \le v_k(x) \le C_2\left(\frac{|x - t_k\xi|}{|t_k\xi|}\right)^{-\frac{n-1}{p(p-1)}}e^{-\al (|x - t_k\xi| - |t_k\xi|)} 
		\end{align} 
		uniformly in $x \in B_R$ and $k \ge K_R$, for some $K_R \in \mathbb{N}$ and the constants $C_1, C_2$ are independent of $R$. Note that 
		\begin{align*} 
			\lim_{t \to \infty} \left(\frac{|x - t\xi|}{|t\xi|}\right)^{-\frac{n-1}{p(p-1)}} = 1, \quad \lim_{t \to \infty} (t - |x - t\xi|) = \left<x,\xi\right>
		\end{align*} 
		converges locally uniformly in $\Rn$, so that we have
		\begin{align}\label{ceqn3}
			v_k(x) \asymp e^{\al \left<x,\xi\right>}
		\end{align}
		in $B_R$ for all $k \ge K_R$ with constants of asymptotics independent of $R$. Applying $C^{1,\alpha}$-estimate \cite{DiBe}, \cite{Tol84} on \eqref{ceqn1} we have $[v_k]_{C^{1,\alpha}(B_R)} \le C_R$ for some $C_R$ depending only on $R > 0$. Letting $R \to \infty$ and passing through a subsequence (which we continue to index with $k$) we get $v_k \to v_\infty$ in $C^1_{\text{loc}}(\Rn)$ with $v_ \infty$ satisfying
		\begin{align}\label{ceqn4}
			& -\Delta_p v_\infty = -\lambda v_\infty^{p-1} \text{ in } \Rn \\
			& c_1 e^{\al \left<x,\xi\right>} \le v_\infty(x) \le c_2 e^{\al \left<x,\xi\right>}
		\end{align}
		from \eqref{ceqn1} and \eqref{ceqn3}. Therefore by Theorem-\ref{1thm1} we have $v_\infty(x) = e^{\al \left<x,\xi\right>}$ as $v_{\infty}(0) = v_k(0) = 1$ for all $k \in \mathbb{N}$. Since this is true for all sequences $t_k \to \infty$ we get the desired existence of limit \eqref{1eqnB} and $P(x,\xi) = e^{\al \left<x,\xi\right>}$.
	\end{proof}
	\smallskip
	\noindent We now present a proof of Theorem-\ref{1thmhardy} using essentially the rescaling and rotational invariance of the problem. The proof closely parallels the main ideas used in the proof of Theorem-\ref{1thm1}.
	\begin{proof}[Proof of Theorem-\ref{1thmhardy}] Note that \eqref{peqn1} is invariant under rescaling and orthogonal transformations i.e., $u_R(x) := u(Rx)$ and $u_A(x) := u(Ax)$ where, $A \in \mathcal{O}(n)$ is an orthogonal matrix, also solves the equation. We may linearize equation \eqref{peqn1} by differentiating with respect to $R$ or by differentiating against a one parameter family of rotations about the origin in $\Rn \setminus \{0\}$ away from the critical point set of $u$. In this case we note that the formal linearized operator corresponding to $-\Delta_{p,a}$ is given by
		\begin{align}\label{peqnlin0}
			L_{u}(\varphi) := \divg\left(|x|^{-ap}\left(|\nabla u|^{p-2} \nabla \varphi + (p-2)|\nabla u|^{p-4}\left<\nabla u, \nabla \varphi\right>\nabla u \right)\right)
		\end{align}
		which is a strictly positive definite elliptic operator whenever $|\nabla u| > 0$ in $\Rn \setminus \{0\}$.
		\\ Note that $V_0(u) := \left.\frac{d}{dR}\right\vert_{R = 1} u_R(x) = \left<x,\nabla u\right>$, so that from \eqref{peqn1} and \eqref{peqnlin0} we get $V_0(u) = \left<x,\nabla u\right>$ solves
		\begin{align}\label{peqn2}
			-L_u\left(V_0(u)\right) = (p-1)\frac{\mu}{|x|^{(a+1)p}}u^{p-2} V_0(u)
		\end{align}
		weakly in $\Rn \setminus (\{0\} \cup \mathcal{C}_u)$ where, $\mathcal{C}_u = \{\nabla u = 0\}$. 
		\\\\ Let us denote by $M^{(i_0,j_0)} := (e_{j_0} \otimes e_{i_0} - e_{i_0} \otimes e_{j_0}) = \left(m^{(i_0,j_0)}_{i,j}\right)_{n \times n}$ for $1 \le i_0 < j_0 \le n$, the matrix with entries $m_{i_0, j_0}^{(i_0,j_0)} = -1$, $m_{j_0, i_0}^{(i_0,j_0)} = 1$ and $m_{i,j}^{(i_0,j_0)} = 0$ otherwise. Note that $\left(M^{(i_0,j_0)}\right)^2 = -\left(e_{i_0} \otimes e_{i_0} + e_{j_0} \otimes e_{j_0}\right)$. We consider the one parameter family of rotation matrices $A^{(i_0,j_0)}(t) := \exp\left(M^{(i_0,j_0)}t\right)$ for $1 \le i_0 < j_0 \le n$ which are rotations in $e_{i_0}$-$e_{j_0}$ plane, given by $$A^{(i_0,j_0)}(t) = I_n + M^{(i_0,j_0)} \sin t + \left(M^{(i_0,j_0)}\right)^2 (1 - \cos t) = \left(a_{i,j}^{(i_0,j_0)}(t)\right)_{n \times n}$$ where, $a_{i_0,i_0}^{(i_0,j_0)}(t) = a_{j_0,j_0}^{(i_0,j_0)}(t) = \cos t$, $a_{i_0,j_0}^{(i_0,j_0)}(t) = -\sin t$, $a_{j_0,i_0}^{(i_0,j_0)}(t) = \sin t$ and $a_{i,j}^{(i_0,j_0)}(t) = \delta_{i,j}$ for $i,j \notin \{i_0,j_0\}$.
		\\ For $1 \le i_0 < j_0 \le n$ we have $$V_{i_0,j_0}(u) := \left.\frac{d}{dt}\right\vert_{t = 0} u(A^{(i_0,j_0)}(t)x) = \left<M^{(i_0,j_0)}x, \nabla u\right> =\left(x_{i_0}\frac{\partial u}{\partial x_{j_0}} - x_{j_0}\frac{\partial u}{\partial x_{i_0}}\right)$$ solves the equation
		\begin{align}\label{peqn3}
			-L_u\left(V_{i_0,j_0}(u)\right) = (p-1)\frac{\mu}{|x|^{(a+1)p}}u^{p-2} V_{i_0,j_0}(u)
		\end{align}
		weakly in $\Rn \setminus (\{0\} \cup \mathcal{C}_u)$. We remark that we may interpret \eqref{peqn2} and \eqref{peqn3} classically in $\Rn \setminus (\{0\} \cup \mathcal{C}_u)$ as by standard regularity theory the solution $u$ is smooth away from $\mathcal{C}_u$.
		\\ We note that the vector fields $V_{i_0,j_0} := x_{i_0}\frac{\partial }{\partial x_{j_0}} - x_{j_0}\frac{\partial }{\partial x_{i_0}}$ for $1 \le i_0 < j_0 \le n$ and $V_0 := \sum_{i=1}^n x_i\frac{\partial }{\partial x_i}$ span the tangent space of $\Rn \setminus \{0\}$ at any point and $V_{i_0,j_0} \perp V_0$ for $1 \le i_0 < j_0 \le n$.
		\\ Therefore combining \eqref{peqn1}, \eqref{peqn2} and \eqref{peqn3} we have
		\begin{align}\label{peqnlin}
			-L_u\left(V(u) + cu\right) = (p-1)\frac{\mu}{|x|^{(a+1)p}}u^{p-2} (V(u) + cu)
		\end{align}
		weakly in $\Rn \setminus (\{0\} \cup \mathcal{C}_u)$, for any $c \in \R$ and $V \in \operatorname{span}\{V_0, V_{i_0,j_0}: 1 \le i_0 < j_0 \le n\}$.
		\\\\ Now we prove the following gradient estimate for the solutions of problem \eqref{peqn1}:
		\begin{align}\label{peqng}
			\sup_{\Rn \setminus \{0\}} \frac{|x| |\nabla u(x)|}{u(x)} := \tilde{\gamma} < \infty.
		\end{align}
		To see this we note that $v_R(x) := R^{\gamma}u_R(x) = R^{\gamma}u(Rx)$ satisfies the problem \eqref{peqn1} for all $R > 0$. In particular $w_R := \log v_R$ satisfies the equation 
		\begin{align}\label{peqn4}
			& -\Delta_{p,a} w_R = \frac{\mu}{|x|^{(a+1)p}} + \frac{(p-1)}{|x|^{ap}}|\nabla w_R|^p \\ 
			\label{peqn4a}
			& - \gamma\log |x| + c_1 \le w_R(x) \le - \gamma\log |x| + c_2
		\end{align}
		in $\Rn \setminus \{0\}$ for some constants $c_1, c_2 \in \R$.
		\\\\ Now fix a $\rho > 1$ and consider the equation \eqref{peqn4} in the annular set $A(\rho) := B_{\rho}(0) \setminus B_{1/\rho}(0)$. Note that by \eqref{peqn4a} the $w_R$ is uniformly bounded in $A(\rho)$ for all $R > 0$ with bounds depending only on $\rho$. Therefore we may apply $C^{1,\alpha}$-estimate of \cite{DiBe}, \cite{Tol84} to $w_R$ in the annular sets $A(\rho)$ to get $[w_R]_{C^{1,\alpha}(A(\rho))} \le C(\rho)$ for some constant $C(\rho) >0$ depending on the parameters $n,p, \mu, \gamma, c_1, c_2$ and $\rho$. In particular we have the gradient estimate 
		\begin{align}\label{peqn5}
			\sup_{A(\rho)} |\nabla w_R(x)| = \sup_{A(\rho)} \frac{|\nabla v_R(x)|}{v_R(x)} = \sup_{A(\rho)}\frac{R|\nabla u(Rx)|}{u(Rx)} \le [w_R]_{C^{1,\alpha}(A(\rho))} \le C(\rho)
		\end{align}
		uniformly in $R > 0$. This in turn implies the gradient estimate \eqref{peqng}.
		\\\\ Let us fix $i_0, j_0$ with $i_0 < j_0$ and consider the vector field $V := \kappa_1 V_0 + \kappa_2 V_{i_0,j_0}$ for some given $\kappa_1 > 0$ and $\kappa_2 \in \R$. Note that $V$ is non-vanishing as $V_0$ is non-vanishing and $V_{i_0,j_0} \perp V_0$. Owing to the gradient estimate \eqref{peqng} we have
		\begin{align}\label{peqn6}
			-\infty < \beta:= \inf_{\Rn \setminus \{0\}} \frac{V(u)}{u} \le \sup_{\Rn \setminus \{0\}} \frac{V(u)}{u} := \alpha < +\infty.
		\end{align}
		We split the rest of the proof into two key steps.
		\\\\ \textbf{Step-I:} We claim that in \eqref{peqn6} we have
		\begin{align}\label{peqn6a} 
			\alpha, \beta \in \{0, -\kappa_1\gamma\}
		\end{align} 
		where $\kappa_1$ is as in the definition of $V$. We prove the claim for $\alpha = \sup_{\Rn \setminus \{0\}} \frac{V(u)}{u}$, the proof for $\beta = \inf_{\Rn \setminus \{0\}} \frac{V(u)}{u}$ follows similarly. If $\alpha = 0$ then we are through. Suppose to the contrary $\alpha \neq 0$, then we show that $\alpha = -\kappa_1 \gamma$ when $\gamma \neq 0$ or arrive at a contradiction when $\gamma = 0$. 
		\\ Let $\{x_k\}_{k \in \mathbb{N}}$ be a sequence of points in $\Rn \setminus \{0\}$ such that $\frac{V(u)}{u}(x_k) \to \alpha$ as $k \to \infty$. Let us denote the sequence of radii $R_k := |x_k|$ and the points $\xi_k := \frac{x_k}{|x_k|} \in \mathbb{S}^{n-1}$. Then note that 
		\begin{align}\label{peqn7}
			\sup_{\Rn \setminus \{0\}} \frac{V(u)}{u} = \sup_{\Rn \setminus \{0\}} \frac{V(u_R)}{u_R} = \sup_{\Rn \setminus \{0\}} V(w_R) = \sup_{\substack{\xi \in \mathbb{S}^{n-1}, \\ R > 0}} \frac{V(u_{R})(\xi)}{u_{R}(\xi)} = \sup_{\substack{\xi \in \mathbb{S}^{n-1}, \\ R > 0}} V(w_{R})(\xi) = \alpha 
		\end{align} 
		for all $R > 0$ and in particular
		\begin{align}\label{peqn7a}
			\frac{V(u)(x_k)}{u(x_k)} = \frac{V(u_{R_k})(\xi_k)}{u_{R_k}(\xi_k)} = V(w_{R_k})(\xi_k) \to \alpha 
		\end{align}
		as $k \to \infty$. Using the estimate from \eqref{peqn5}, $[w_{R_k}]_{C^{1,\alpha}(A(\rho))} \le C(\rho)$ for $\rho > 1$ we may extract a subsequence (which we continue to index with $k$) such that $\xi_k \to \xi \in\mathbb{S}^{n-1}$, $w_{R_k} \to w_{\infty}$ in $C^1_{\text{loc}}(\Rn \setminus \{0\})$ and $w_\infty$ satisfies 
		\begin{align}\label{peqn7b}
			& -\Delta_{p,a} w_\infty = \frac{\mu}{|x|^{(a+1)p}} + \frac{(p-1)}{|x|^{ap}}|\nabla w_\infty|^p \\ 
			\label{peqn7c}
			& - \gamma\log |x| + c_1 \le w_\infty(x) \le - \gamma\log |x| + c_2.
		\end{align} 
		Hence, $u_{\infty} := e^{w_\infty}$ satisfies the problem \eqref{peqn1}. Also from the definition, $C^1_{\text{loc}}(\Rn \setminus \{0\})$ convergence of $w_{R_k} \to w_{\infty}$ and \eqref{peqn7} we have 
		\begin{align}\label{peqn8i}
			\sup_{\Rn \setminus \{0\}} \frac{V(u_\infty)}{u_\infty} = \sup_{\Rn \setminus \{0\}} V(w_\infty) \le \alpha
		\end{align}
		and in particular from \eqref{peqn7a} we have $\frac{V(u_\infty)}{u_\infty}(\xi) = V(w_\infty)(\xi) = \alpha$. 
		\\ Therefore, from \eqref{peqnlin} and \eqref{peqn8i} we have $(\alpha u_\infty - V(u_\infty))$ is a non-negative weak solution to the linearized equation 
		\begin{align}\label{peqn8}
			-L_{u_\infty}\left(\alpha u_\infty - V(u_\infty)\right) = (p-1)\frac{\mu}{|x|^{(a+1)p}}u_\infty^{p-2} (\alpha u_\infty - V(u_\infty))
		\end{align}
		in $\Rn \setminus (\{0\} \cup \mathcal{C}_{u_\infty})$. Since $(\alpha u_\infty - V(u_\infty))(\xi) = 0$, $\alpha \neq 0$ by our assumption, $u_\infty > 0$ and $V$ is non-vanishing, we have $|\nabla u_\infty| > 0$ in an open neighborhood of $\xi$ in $\Rn \setminus \{0\}$. By applying Harnack inequality (\cite{Serrin70}, \cite{T}) and a connectedness argument as before we have $V(u_\infty) \equiv \alpha u_\infty$ in $\Rn \setminus \{0\}$ i.e., $V(w_\infty) \equiv \alpha$ in $\Rn \setminus \{0\}$. Therefore, $w_\infty$ grows linearly along the integral curves of $V$.
		\\\\ Let $y(t)$ be an integral curve of the vector field $V = (\kappa_1 V_0 + \kappa_2 V_{i_0,j_0})$ in $\Rn \setminus \{0\}$ i.e., $y'(t) = \left.V\right\vert_{y(t)}$. We denote this system of linear ordinary differential equations as 
		\begin{align}\label{peqn8a}
			y'(t) = Ay(t)
		\end{align}
		where, $A = \kappa_1 I_n + \kappa_2 M^{(i_0,j_0)}$, i.e., $y_{i_0}'(t) = \kappa_1 y_{i_0}(t) - \kappa_2 y_{j_0}(t)$, $y_{j_0}'(t) = \kappa_1 y_{j_0}(t) + \kappa_2 y_{i_0}(t)$ and $y_i'(t) = \kappa_1 y_i(t)$ for $i \neq i_0, j_0$. Then the curve $y(t)$ with initial point $y(0) := y_0 \in \Rn \setminus \{0\}$ is given by 
		\begin{align}
			y(t) = \exp\left(At\right)y_0 = e^{\kappa_1 t} \exp\left(\kappa_2 M^{(i_0,j_0)} t\right) y_0.
		\end{align}
		Using the fact that $\frac{d}{dt} w_\infty(y(t)) = V(w_\infty)(y(t)) \equiv \alpha$ we get $$w_{\infty}(y(t)) - w_\infty(y_0) = \alpha t$$ for all $t \in \R$. Therefore, $u_\infty(y(t)) = e^{\alpha t}u_\infty(y_0)$ and from the point-wise estimate on $u_\infty$ we have 
		\begin{align} 
			e^{\alpha t}u_\infty(y_0) = u_\infty(y(t)) \asymp |y(t)|^{-\gamma} = \left|e^{\kappa_1 t} \exp\left(\kappa_2 M^{(i_0,j_0)} t\right) y_0\right|^{-\gamma} \asymp e^{-\gamma \kappa_1 t}
		\end{align} 
		for all $t \in \R$. Hence, $\alpha = -\gamma \kappa_1$ when $\gamma \neq 0$ and leads to a contradiction when $\gamma = 0$ (as $\alpha \neq 0$), proving the claim.
		\\\\ \textbf{Step-II:} From previous step \eqref{peqn6a} when $\gamma \ge 0$ we have $\alpha \le 0$ i.e., $$V(u) = \kappa_1 V_0(u) + \kappa_2 V_{i_0,j_0}(u) \le 0$$ and when $\gamma < 0$ we have $\beta \ge 0$ i.e., $$V(u) = \kappa_1 V_0(u) + \kappa_2 V_{i_0,j_0}(u) \ge 0$$ for all $\kappa_1 > 0$ and $\kappa_2 \in \R$. In either case letting $\kappa_1 \to 0^+$ and setting $\kappa_2 = \pm 1$ we get $V_{i_0,j_0}(u) \equiv 0$ in $\Rn \setminus \{0\}$ for $1 \le i_0 < j_0 \le n$. Therefore, $\left<\nabla u, \xi\right> = 0$ for all $\xi \in V_0^\perp$ and hence $u$ must be a radial function. By abuse of notation we denote $u(x) = u(r)$ where, $r = |x|$. Then if $\gamma \ge 0$ we have $V_0(u) = ru'(r) \le 0$ i.e., $u$ is non-increasing and if $\gamma < 0$ we have $V_0(u) = ru'(r) \ge 0$ i.e., $u$ is non-decreasing. Then the equation \eqref{peqn1} can be written as 
		\begin{align}\label{peqn9}
			& -\left(r^{n-1-ap}|u'(r)|^{p-2}u'(r)\right)' = \mu \, r^{n-1 - (a+1)p} u^{p-1}(r) \text{ in } \R_+ = (0,\infty) \\ 
			\label{peqn9a}
			& C_1r^{-\gamma} \le u(r) \le C_2 r^{-\gamma}.
		\end{align}
		Also from the point-wise gradient estimate \eqref{peqng} we have $r|u'(r)| \le \tilde{\gamma}u(r)$. 
		\\\\ First we consider the case $\mu = 0$ so that either $\gamma = 0$  or  $\gamma = \frac{n - (a+1)p}{p-1}$ (when $n - (a+1)p \neq 0$). We have $\left(r^{n-1-ap}|u'(r)|^{p-2}u'(r)\right)' = 0$ in $\R_+$ and consequently either $u'(r) \equiv 0$ in which case $u$ is a constant function or $|u'(r)| = Cr^{-\frac{n - (a+1)p}{p-1} - 1}$ for some $C > 0$. If $n - (a+1)p = 0$ then $|u'(r)| = \frac{C}{r}$ can be ruled out by the point-wise bounds \eqref{peqn9a}. So the remaining possibility is $u(r) = Cr^{-\frac{n - (a+1)p}{p-1}}$ for some $C > 0$.
		\\\\ Otherwise when $\mu \neq 0$ we have $\gamma \neq 0, \frac{n-(a+1)p}{p-1}$ as well. We use the point-wise bounds \eqref{peqn9a} on RHS of \eqref{peqn9}, the point-wise gradient estimate \eqref{peqng} $$r|u'(r)| \le \tilde{\gamma}u(r) \implies r^{n - 1 -ap}|u'(r)|^{p-1} \le C'r^{(p-1)\left(\frac{n - (a+1)p}{p-1} - \gamma\right)}$$ and integrate \eqref{peqn9} from $0$ to $r$ when $\gamma < \frac{n-(a+1)p}{p-1}$ or from $r$ to $\infty$ when $\gamma > \frac{n-(a+1)p}{p-1}$ to get the point-wise estimate for the gradient $|u'(r)| \asymp r^{-\gamma - 1}$ in $(0,\infty)$. This in particular implies $\alpha := \sup_{\Rn \setminus \{0\}} \frac{V_0(u)}{u} = \sup_{r > 0} \frac{ru'(r)}{u(r)}$ and $\beta := \inf_{\Rn \setminus \{0\}} \frac{V_0(u)}{u} = \inf_{r > 0} \frac{ru'(r)}{u(r)}$ in \eqref{peqn6} are never equal to $0$. Therefore by the claim in Step-I for $V = V_0$ we must have $\alpha = \beta = -\gamma$. Hence $\frac{ru'(r)}{u(r)} \equiv -\gamma$ in $(0,\infty)$ i.e., $u(r) = Cr^{-\gamma}$ for some $C >0$. \end{proof}
	
	\section{Appendix}
	\subsection{Proof of the sharp gradient estimate for eigenfunctions}
	In this section we present a proof of the sharp gradient Lemma-\ref{lemmagrad}.
	\begin{proof}[Proof of Lemma-\ref{lemmagrad}]
		By translation invariance of \eqref{E1}, $C^{1,\alpha}$-estimate \cite{DiBe}, \cite{Tol84} and Harnack inequality \cite{T} we have
		$$|\nabla v|(x_0) \le \sup_{B_{1/2}(x_0)} |\nabla v| \le C\sup_{B_{1}(x_0)} v \le c(n,p,\lambda) v(x_0)$$ for all $x_0 \in \mathbb{R}^n$ for some universal constant $c(n,p,\lambda) > 0$ i.e., $|\nabla \log v| \le c(n,p,\lambda)$ in $\Rn$.
		\\ We set the notations $w := -(p-1)\log v$ and the constant $\kappa^{p/2} := (p-1)^{p-1}\lambda$. Then $w$ satisfies the equation
		\begin{align}\label{N2eqn7}
			\Delta_p w = -\lambda (p-1)^{p-1} + |\nabla w|^p
		\end{align}
		and further we use the notation $f = |\nabla w|^2$. 
		\\ \textit{Claim}: $f \le \kappa$ in $\Rn$. 
		\\ Suppose to the contrary that $\{f > \kappa\}$ is non-empty. Following the notation of linearized $p$-Laplacian from \eqref{2eqn1} and \cite[Lemma-2.1]{SW14} we have the point-wise Bochner identity
		\begin{align}\label{N2eqn8}
			L_w(f) = 2f^{\frac{p}{2}-1} w_{ij}^2 + \left(\frac{p}{2} - 1\right)|\nabla f|^2f^{\frac{p}{2}-2} + pf^{\frac{p}{2}-1}\left<\nabla w, \nabla f\right>
		\end{align} wherever $f = |\nabla w|^2 > 0$. In this case we note that $\operatorname{Ric} = 0$ in the identity. By standard Elliptic regularity theory we know $w, f \in C^\infty$ wherever $f = |\nabla w| > 0$. By Sard's theorem almost all level sets $\{f = c\}$ for $c > 0$ are regular and smooth. The outward unit normal to $\partial \{f > c\} = \{f = c\}$ is given by $\nu := -\frac{\nabla f}{|\nabla f|}$ for these levels. 
		\\ Let $\delta > 0$ be such that $\{f = \kappa + \delta\}$ is a regular level set. Denote $\omega := \left(f - (\kappa + \delta)\right)_+$ and the support of $\omega$ as $\Omega := \left\{f > \kappa + \delta \right\}$, so that $\omega \in C^\infty(\overline{\Omega})$. The first step is to show that there exists positive constants $a,b > 0$ depending only on the parameters $n, p, \delta$ such that
		\begin{align}\label{N2eqn9}
			\int_{\Rn} L_w(\varphi)\omega \,dx \ge \int_{\Rn} \varphi (a\omega - b|\nabla \omega|)\,dx
		\end{align}
		for all non-negative test functions $\varphi \in C_c^{\infty}(\Rn)$. Applying integration by parts twice on LHS of \eqref{N2eqn9} we have
		\begin{align}\label{N2eqn10}
			\int_{\Rn} L_w(\varphi)\omega \,dx &= \int_{\Omega} L_w(\omega) \varphi \,dx + \int_{\partial \Omega} f^{\frac{p}{2}-1}\left<A(\nabla \varphi),\nu\right> \omega\,d\sigma - \int_{\partial \Omega} f^{\frac{p}{2}-1}\left<A(\nabla \omega),\nu\right> \varphi \,d\sigma
		\end{align}
		where, $\nu = -\frac{\nabla f}{|\nabla f|}$ is the outward unit normal to $\partial\Omega$. By a abuse of notation we will also denote $\nu = -\frac{\nabla \omega}{|\nabla \omega|}$ on $\partial \Omega$. Note that the first boundary term on RHS of \eqref{N2eqn10} vanishes as  $\omega = 0$ on $\partial \Omega$ and as for the second boundary term in \eqref{N2eqn10} we have $$-\int_{\partial \Omega} f^{\frac{p}{2}-1}\left<A(\nabla \omega),\nu\right> \varphi \,d\sigma = \int_{\partial \Omega} f^{\frac{p}{2}-1}\left<A(\nabla \omega),\frac{\nabla \omega}{|\nabla \omega|}\right> \varphi \,d\sigma \ge 0.$$ Therefore
		\begin{align}\label{N2eqn11}
			\int_{\Rn} L_w(\varphi)\omega \,dx &\ge \int_{\Omega} L_w(\omega) \varphi \,dx
		\end{align}
		for all non-negative test functions $\varphi \in C_c^{\infty}(\Rn)$. 
		\\ It suffices to show that $L_w(\omega) \ge (a\omega - b|\nabla \omega|)$ in $\Omega$. Using an orthonormal frame of coordinates $\{e_1, \cdots, e_n\}$ in $\Rn$ such that $e_n = \frac{\nabla w}{|\nabla w|}$ we can rewrite equation \eqref{N2eqn7} as 
		\begin{align}\label{N2eqn12}
			& \Delta_p w = |\nabla w|^{p-2}\left(\Delta u + (p-2)|\nabla w|^{-2}\left<(\nabla^2 w) \nabla w, \nabla w\right>\right) = -\lambda (p-1)^{p-1} + |\nabla w|^p \nn
			\\ \implies & f^{\frac{p}{2}-1}\left(w_{11} + \cdots + w_{nn} + (p-2)w_{nn}\right) = f^{\frac{p}{2}} - \lambda (p-1)^{p-1} \nn
			\\ \implies & w_{11} + \cdots + w_{n-1,n-1} = f - f^{1 - \frac{p}{2}}\lambda (p-1)^{p-1} - (p-1) w_{nn}.
		\end{align}
		\\ Then using Cauchy-Schwarz inequality to the first term on the RHS of equation \eqref{N2eqn8} followed by the substitution of the identity \eqref{N2eqn12} we have
		\begin{align}\label{N2eqn13}
			\sum_{1 \le i,j \le n} w_{ij}^2 &\ge \frac{(w_{11} + \cdots + w_{n-1, n-1})^2}{n-1} + \sum_{1 \le i \le n} w_{in}^2 \nn 
			\\ &= \frac{(f - f^{1 - \frac{p}{2}}\lambda (p-1)^{p-1} -(p-1)w_{nn})^2}{n-1} + \sum_{1 \le i \le n} w_{in}^2.
		\end{align}
		Note that $\nabla f = 2|\nabla w|(w_{1n}, \cdots, w_{nn})^T = 2f^{1/2}(w_{1n}, \cdots, w_{nn})^T$. Therefore, $\left<\nabla f, \nabla w\right> = 2fw_{nn}$. Also, $$\sum_{1 \le i \le n} w_{in}^2 = \frac{|\nabla f|^2}{4f}.$$ Therefore, using these we rewrite the RHS of inequality \eqref{N2eqn13} as
		\begin{align}\label{N2eqn14}
			\sum_{1 \le i,j \le n} w_{ij}^2 &\ge \frac{(f - f^{1 - \frac{p}{2}}\lambda (p-1)^{p-1})^2}{n-1} - 2\frac{p-1}{n-1}w_{nn}\left(f - f^{1 - \frac{p}{2}}\lambda (p-1)^{p-1} \right) + \sum_{1 \le i \le n} w_{in}^2 \nn 
			\\ & =  \frac{(f - f^{1 - \frac{p}{2}}\lambda (p-1)^{p-1})^2}{n-1} - \frac{p-1}{n-1}\left(f - f^{1 - \frac{p}{2}}\lambda (p-1)^{p-1} \right) \frac{\left<\nabla f, \nabla w\right>}{f} + \frac{|\nabla f|^2}{4f}.
		\end{align}
		Using inequality \eqref{N2eqn14} in the identity \eqref{N2eqn8} we get
		\begin{align}\label{N2eqn15}
			L_w(\omega) &\ge 2f^{1 - \frac{p}{2}} \frac{(f^{\frac{p}{2}} - \lambda (p-1)^{p-1})^2}{n-1} - 2\frac{p-1}{n-1}f^{-1}\left(f^{\frac{p}{2}} - \lambda (p-1)^{p-1} \right)\left<\nabla f, \nabla w\right> \nn 
			\\ & \quad + \left(\frac{p-1}{2}\right)\frac{|\nabla f|^2}{4f} + pf^{\frac{p}{2} - 1}\left<\nabla w, \nabla f\right>.
		\end{align}
		Note that we have $0 < \kappa + \delta \le f = |\nabla w|^2 \le C(n,p,\lambda)$, $f = \omega + \kappa + \delta$ and $\nabla f = \nabla \omega$ in $\Omega$. Using the mean value theorem we may write
		\begin{align}\label{N2eqn16}
			(f^{\frac{p}{2}} - \lambda (p-1)^{p-1})^2 = \frac{p^2}{4} \xi^{p - 2} (f - \kappa)^2 \ge c_0 \omega 
		\end{align} where, $\kappa < \xi < f$ and $c_0$ is a constant depending on $n, p , \kappa, \delta$. Finally using this in \eqref{N2eqn15} together with the point-wise bounds on $f$, $|\nabla w|$ in $\Omega$ we have
		\begin{align}\label{N2eqn17}
			L_w(\omega) \ge a\omega - b|\nabla \omega| \text{ in } \Omega
		\end{align}
		and consequently together with the inequality \eqref{N2eqn11} we get \eqref{N2eqn9}.
		\\\\ Testing \eqref{N2eqn9} with $\varphi^2\omega^q$ for some non-negative test function $\varphi$ in $\Rn$ we get the inequality
		\begin{align}\label{N2eqn18}
			-\int_{\Omega} f^{\frac{p}{2}-1}\left<A(\nabla (\varphi^2 \omega^q)), \nabla \omega\right> \,dx \ge \int_{\Omega} \left( a\varphi^2 \omega^{q+1} - b\varphi^2 \omega^q |\nabla \omega| \right) \,dx.
		\end{align}
		Rearranging the terms in \eqref{N2eqn18} and using the inequality $\left<A(\nabla \omega), \nabla \omega\right> \ge \min\{1,p-1\}|\nabla \omega|^2$ we have
		\begin{align}\label{N2eqn19}
			a \int_{\Omega} \varphi^2 \omega^{q+1}\,dx &\le b\int_{\Omega} \varphi^2 \omega^q |\nabla \omega| \,dx + c_1 \int_{\Omega} \varphi \omega^q |\nabla \varphi|  |\nabla \omega| \,dx - c_2q \int_{\Omega} \varphi^2 \omega^{q-1} |\nabla \omega|^2 \,dx.
		\end{align}
		Again using Cauchy-Schwarz inequality we have 
		\begin{align}\label{N2eqn20}
			a \int_{\Omega} \varphi^2 \omega^{q+1}\,dx &\le \epsilon \int_{\Omega} \varphi^2 \omega^{q+1} \,dx + \frac{b}{4\epsilon}\int_{\Omega} \varphi^2 \omega^{q-1} |\nabla \omega|^2\,dx \nn \\ & \quad  + \epsilon \int_{\Omega} |\nabla \varphi|^2 \omega^{q+1} \,dx + \frac{c_1}{4\epsilon} \int_{\Omega} \varphi^2 \omega^{q-1}|\nabla \omega|^2 \omega^{q-1} \,dx \nn 
			\\ & \quad - c_2q \int_{\Omega} \varphi^2 \omega^{q-1} |\nabla \omega|^2 \,dx.
		\end{align}
		Then choosing $\epsilon < a/2$ to be sufficiently small and $q > 1$ to be large enough such that $c_2q > \frac{b+c_1}{4\epsilon}$ we have
		\begin{align}\label{N2eqn21}
			a \int_{\Omega} \varphi^2 \omega^{q+1}\,dx &\le 2\epsilon \int_{\Omega} |\nabla \varphi|^2 \omega^{q+1} \,dx.
		\end{align}
		Choosing $\varphi \equiv 1$ in $B_k$ such that $|\nabla \varphi| \le 2$ in $B_{k+1}\setminus B_k$ for positive integers $k$ we successively get
		\begin{align}\label{N2eqn22} 
			a \int_{B_k} \omega^{q+1} \,dx \le 4\epsilon \int_{B_{k+1}\setminus B_k} \omega^{q+1} \,dx \implies \frac{c}{\epsilon} \int_{B_k} \omega^{q+1} \,dx \le \int_{B_{k+1}} \omega^{q+1} \,dx.
		\end{align}
		Iterating this for $k \ge 1$ we get 
		\begin{align}\label{N2eqn23} 
			\int_{B_R} \omega^{q+1} \,dx \ge Ce^{R \log \frac{c}{\epsilon}}
		\end{align} for all $R$ sufficiently large. However, $\omega$ is a bounded function and the volume of the ball $|B_R| = |B_1|R^n$ only grows polynomialy. This leads to a contradiction unless $\omega \equiv 0$, i.e., $f \le \kappa$ in $\Rn$. This completes the proof of the sharp gradient estimate from above. \end{proof}

	\bibliographystyle{plain}
	\bibliography{Ramyabib.bib}

\begin{thebibliography}{10}

\bibitem{ACP05}
B.~Abdellaoui, E.~Colorado, and I.~Peral.
\newblock Some improved {C}affarelli-{K}ohn-{N}irenberg inequalities.
\newblock {\em Calc. Var. Partial Differential Equations}, 23(3):327--345,
  2005.

\bibitem{AH98}
Walter Allegretto and Yin~Xi Huang.
\newblock A {P}icone's identity for the $p$-laplacian and applications.
\newblock {\em Nonlinear Anal.}, 32(7):819--830, 1998.

\bibitem{CKN}
L.~Caffarelli, R.~Kohn, and L.~Nirenberg.
\newblock Partial regularity of suitable weak solutions of the
  {N}avier-{S}tokes equations.
\newblock {\em Comm. Pure Appl. Math.}, 35(6):771--831, 1982.

\bibitem{CC10}
Luis~A. Caffarelli and Michael~G. Crandall.
\newblock Distance functions and almost global solutions of eikonal equations.
\newblock {\em Comm. Partial Differential Equations}, 35(3):391--414, 2010.

\bibitem{CC22}
Giulio Ciraolo and Rosario Corso.
\newblock Symmetry for positive critical points of
  {C}affarelli-{K}ohn-{N}irenberg inequalities.
\newblock {\em Nonlinear Anal.}, 216:Paper No. 112683, 23, 2022.

\bibitem{CFR}
Giulio Ciraolo, Alessio Figalli, and Alberto Roncoroni.
\newblock Symmetry results for critical anisotropic {$p$}-{L}aplacian equations
  in convex cones.
\newblock {\em Geom. Funct. Anal.}, 30(3):770--803, 2020.

\bibitem{CXL24}
Giulio Ciraolo and Xiaoliang Li.
\newblock Classification of solutions to the anisotropic $n$-liouville equation
  in $\mathbb{R}^n$.
\newblock {\em preprint arXiv:2306.12039}.

\bibitem{CXL22}
Giulio Ciraolo and Xiaoliang Li.
\newblock An exterior overdetermined problem for {F}insler {$N$}-{L}aplacian in
  convex cones.
\newblock {\em Calc. Var. Partial Differential Equations}, 61(4):Paper No. 121,
  27, 2022.

\bibitem{Crandall2008}
Michael~G. Crandall.
\newblock A visit with the {$\infty$}-{L}aplace equation.
\newblock In {\em Calculus of variations and nonlinear partial differential
  equations}, volume 1927 of {\em Lecture Notes in Math.}, pages 75--122.
  Springer, Berlin, 2008.

\bibitem{Dancer13}
E.~N. Dancer, Yihong Du, and Messoud Efendiev.
\newblock Quasilinear elliptic equations on half- and quarter-spaces.
\newblock {\em Adv. Nonlinear Stud.}, 13(1):115--136, 2013.

\bibitem{DiBe}
E.~DiBenedetto.
\newblock {$C\sp{1+\alpha }$} local regularity of weak solutions of degenerate
  elliptic equations.
\newblock {\em Nonlinear Anal.}, 7(8):827--850, 1983.

\bibitem{RSanH}
Ramya Dutta and Sandeep Kunnath.
\newblock Symmetry for a quasilinear elliptic equation in hyperbolic space.
\newblock {\em preprint arXiv:2306.14187}, 2023.

\bibitem{FS2}
Francesco Esposito, Alberto Farina, Luigi Montoro, and Berardino Sciunzi.
\newblock Monotonicity of positive solutions to quasilinear elliptic equations
  in half-spaces with a changing-sign nonlinearity.
\newblock {\em Calc. Var. Partial Differential Equations}, 61(4):Paper No. 154,
  14, 2022.

\bibitem{FS1}
Francesco Esposito, Alberto Farina, Luigi Montoro, and Berardino Sciunzi.
\newblock On the {G}ibbons' conjecture for equations involving the
  {$p$}-{L}aplacian.
\newblock {\em Math. Ann.}, 382(1-2):943--974, 2022.

\bibitem{EMSV24}
Francesco Esposito, Luigi Montoro, Berardino Sciunzi, and Domenico Vuono.
\newblock Asymptotic behaviour of solutions to the anisotropic doubly critical
  equation.
\newblock {\em Calc. Var. Partial Differential Equations}, 63(3):Paper No. 77,
  44, 2024.

\bibitem{ES20}
Francesco Esposito and Berardino Sciunzi.
\newblock On the {H}\"{o}pf boundary lemma for quasilinear problems involving
  singular nonlinearities and applications.
\newblock {\em J. Funct. Anal.}, 278(4):108346, 25, 2020.

\bibitem{PEsposito18}
Pierpaolo Esposito.
\newblock A classification result for the quasi-linear {L}iouville equation.
\newblock {\em Ann. Inst. H. Poincar\'e{} C Anal. Non Lin\'eaire},
  35(3):781--801, 2018.

\bibitem{PEsposito21}
Pierpaolo Esposito.
\newblock Isolated singularities for the {$n$}-{L}iouville equation.
\newblock {\em Calc. Var. Partial Differential Equations}, 60(4):Paper No. 137,
  17, 2021.

\bibitem{FSV3}
Alberto Farina, Berardino Sciunzi, and Enrico Valdinoci.
\newblock Bernstein and {D}e {G}iorgi type problems: new results via a
  geometric approach.
\newblock {\em Ann. Sc. Norm. Super. Pisa Cl. Sci. (5)}, 7(4):741--791, 2008.

\bibitem{XH1}
Cheng-Jun He and Chang-Lin Xiang.
\newblock Asymptotic behaviors of solutions to quasilinear elliptic equations
  with {H}ardy potential.
\newblock {\em J. Math. Anal. Appl.}, 441(1):211--234, 2016.

\bibitem{HKM}
Juha Heinonen, Tero Kilpel\"{a}inen, and Olli Martio.
\newblock {\em Nonlinear potential theory of degenerate elliptic equations}.
\newblock Dover Publications, Inc., Mineola, NY, 2006.

\bibitem{HK12}
Toshio Horiuchi and Peter Kumlin.
\newblock On the {C}affarelli-{K}ohn-{N}irenberg-type inequalities involving
  critical and supercritical weights.
\newblock {\em Kyoto J. Math.}, 52(4):661--742, 2012.

\bibitem{IT21}
Kenta Itakura and Satoshi Tanaka.
\newblock A note on the asymptotic behavior of radial solutions to quasilinear
  elliptic equations with a {H}ardy potential.
\newblock {\em Proc. Amer. Math. Soc. Ser. B}, 8:302--310, 2021.

\bibitem{kv}
Satyanad Kichenassamy and Laurent V\'{e}ron.
\newblock Singular solutions of the {$p$}-{L}aplace equation.
\newblock {\em Math. Ann.}, 275(4):599--615, 1986.

\bibitem{OSV}
Francescantonio Oliva, Berardino Sciunzi, and Giusi Vaira.
\newblock Radial symmetry for a quasilinear elliptic equation with a critical
  {S}obolev growth and {H}ardy potential.
\newblock {\em J. Math. Pures Appl. (9)}, 140:89--109, 2020.

\bibitem{Pin06}
Yehuda Pinchover.
\newblock Topics in the theory of positive solutions of second-order elliptic
  and parabolic partial differential equations.
\newblock In {\em Proceedings of Symposia in Pure Mathematics}, volume~76, page
  329. Providence, RI; American Mathematical Society; 1998, 2007.

\bibitem{PS05}
Arkady Poliakovsky and Itai Shafrir.
\newblock Uniqueness of positive solutions for singular problems involving the
  {$p$}-{L}aplacian.
\newblock {\em Proc. Amer. Math. Soc.}, 133(9):2549--2557, 2005.

\bibitem{PHT2023}
Ma~Pu, Shuibo Huang, and Qiaoyu Tian.
\newblock Asymptotic behaviors of solutions to quasilinear elliptic equation
  with {H}ardy potential and critical {S}obolev exponent.
\newblock {\em Qual. Theory Dyn. Syst.}, 22(4):Paper No. 153, 31, 2023.

\bibitem{Sciunzi}
Berardino Sciunzi.
\newblock Classification of positive
  {$\mathcal{D}^{1,p}(\mathbb{R}^N)$}-solutions to the critical {$p$}-{L}aplace
  equation in {$\mathbb{R}^N$}.
\newblock {\em Adv. Math.}, 291:12--23, 2016.

\bibitem{Serrin70}
James Serrin.
\newblock On the strong maximum principle for quasilinear second order
  differential inequalities.
\newblock {\em J. Functional Analysis}, 5:184--193, 1970.

\bibitem{Vetois2}
Shaya Shakerian and J\'{e}r\^{o}me V\'{e}tois.
\newblock Sharp pointwise estimates for weighted critical p-laplace equations.
\newblock {\em Nonlinear Anal.}, 206:Paper No. 112236, 18, 2021.

\bibitem{SW14}
Chiung-Jue~Anna Sung and Jiaping Wang.
\newblock Sharp gradient estimate and spectral rigidity for {$p$}-{L}aplacian.
\newblock {\em Math. Res. Lett.}, 21(4):885--904, 2014.

\bibitem{Tol84}
Peter Tolksdorf.
\newblock Regularity for a more general class of quasilinear elliptic
  equations.
\newblock {\em J. Differential Equations}, 51(1):126--150, 1984.

\bibitem{T}
Neil~S. Trudinger.
\newblock On {H}arnack type inequalities and their application to quasilinear
  elliptic equations.
\newblock {\em Comm. Pure Appl. Math.}, 20:721--747, 1967.

\bibitem{Vetois}
J\'{e}r\^{o}me V\'{e}tois.
\newblock A priori estimates and application to the symmetry of solutions for
  critical {$p$}-{L}aplace equations.
\newblock {\em J. Differential Equations}, 260(1):149--161, 2016.

\end{thebibliography}
\end{document}